     \def\section{\@startsection{section}{1}
     \z@{.7\linespacing\@plus\linespacing}{.5\linespacing}
     {\bfseries
     \centering
     }}
     \def\@secnumfont{\bfseries}
\def\phi{\varphi }
\def\epsilon{\varepsilon}
\def\ch{\cosh}
\newtheorem{theorem}{Theorem}[section]
\newtheorem{lemma}[theorem]{Lemma}
\newtheorem{proposition}[theorem]{Proposition}
\newtheorem{corollary}[theorem]{Corollary}
\theoremstyle{definition}
\newtheorem{definition}[theorem]{Definition}
\newtheorem{example}[theorem]{Example}
\newtheorem{examples}[theorem]{Examples}
\newtheorem{remark}[theorem]{Remark}
\newtheorem{remarks}[theorem]{Remarks}
\newtheorem{facts}[theorem]{Facts}
\numberwithin{equation}{section}
\begin{document}

\title[Association schemes and hypergroups]{Generalized Commutative 
Association Schemes, Hypergroups, and Positive Product Formulas}

\author{Michael Voit}
\address{Michael Voit: Fakult\"at Mathematik,
 Technische Universit\"at Dortmund, 
          Vogel\-poths\-weg 87,
          D-44221 Dortmund, Germany}
\email{michael.voit@math.tu-dortmund.de}

\subjclass[2010] {Primary 43A62; Secondary 05E30, 33C54, 33C67, 20N20, 43A90}

\keywords{Association schemes, Gelfand pairs, hypergroups, Hecke pairs, 
spherical functions, positive product formulas, dual convolution, 
 distance-transitive graphs.}

\begin{abstract}
It is well known that finite commutative association schemes in the sense of
the monograph of Bannai and Ito lead to finite commutative hypergroups with 
positive dual convolutions and even dual hypergroup structures. 
In this paper we present several discrete generalizations of
association schemes which also lead to associated hypergroups.
We show that  discrete commutative hypergroups
associated with such  generalized association schemes admit dual positive 
 convolutions at least on the support of the Plancherel measure.
We hope that examples for this theory will lead to the existence of new 
dual positive product formulas in near future.
\end{abstract}

\maketitle

\par\bigskip\noindent
 This paper is devoted to Herbert Heyer on the occasion of
his eightieth birthday.

\section{Motivation}

The following setting appears
quite often in the theory of Gelfand pairs, spherical functions, and
associated  special functions:

 Let $(G_n,H_n)_{n\in\mathbb N}$ be a sequence of
Gelfand pairs , i.e., locally compact groups $G_n$ with compact subgroups
$H_n$ such that the Banach algebras $M_b(G_n||H_n)$ of all bounded,
 signed $H_n$-biinvariant Borel measures on $G_n$ are commutative.
 Assume also that the 
double coset spaces $G_n//H_n:=\{H_ngH_n: \> g\in G_n\}$ with the
quotient topology  are homeomorphic with
some fixed locally compact space $D$. Then the  space $M_b(D)$ 
 carries the  canonical double coset hypergroup convolutions $*_n$.

A non-trivial  $H_n$-biinvariant continuous function $\phi_n\in C(G_n)$
is called spherical if the product formula
\begin{equation}\label{prod-formula-allg}
\phi_n(g)\phi_n(h)=\int_{H_n} \phi_n(gkh)\> d\omega_{H_n}(k) 
\quad(g,h\in G_n)
\end{equation}
holds with the  normalized Haar measure
 $\omega_{H_n}$ of $H_n$. Via $G_n//H_n\equiv D$, we may identify the
 spherical functions of $(G_n,H_n)$ with  the nontrivial continuous
 functions on  $D$, which are multiplicative
w.r.t.~$*_n$.

For
all relevant examples of such series $(G_n,H_n)_n$, the spherical functions  are
parameterized by some  spectral parameter set $\chi(D)$ independent on $n$, and
 the associated functions $\phi_n:\chi(D)\times D\to\mathbb C$
 can be embedded into a 
family of special functions which depend
analytically on  $n$  
in some domain $A\subset\mathbb C$, where these functions 
are
 spherical 
for some integers $n$.
 In many cases, we can determine these
 special functions  and obtain  concrete versions of the product
formula (\ref{prod-formula-allg}), in which $n$ appears as a parameter.
 Based on
Carleson's theorem, a principle of analytic continuation 
(see e.g.~\cite{Ti}, p.186), 
it is often easy to extend the positive product formula for  $\phi_n$
in the group cases to a continuous range 
of parameters. Usually, this extension leads to a continuous family of
commutative hypergroups.

Classical examples of such positive product formulas are  the
well-known  product formulas of Gegenbauer for the normalized ultraspherical 
polynomials
\begin{equation}\label{classical-ultra-pol}
R_k^{(\alpha,\alpha)}(x)= \>_2F_1(2\alpha+k+1, -k, \alpha+1; (1-x)/2)
\quad(k\in\mathbb N_0)
\end{equation}
on $D=[-1,1]$ with $\chi(D)= \mathbb N_0$ and
 for the modified Bessel functions
\begin{equation}\label{classical-Bessel-funct}
\Lambda_\alpha(x,y):=j_\alpha(xy)
\quad\text{with}\quad j_\alpha(z):=\>_0F_1(\alpha+1; -z^2/4)
\quad(y\in\mathbb C)
\end{equation}
on $D=[0,\infty[$ with $\chi(D)=\mathbb C$; 
see e.g.~the survey \cite{A}. In both cases, this works for
$\alpha\in[-1/2,\infty[$ where $\alpha=(n-1)/2$ corresponds
to the  Gelfand
    pair $(G_n,H_n)$ with
$G_n=SO(n+2), \> H_n=SO(n+1)$ and 
$G_n=SO(n+1)\ltimes\mathbb R^{n+1}, \> H_n=SO(n+1)$ respectively.
The continuous ultraspherical product formula  can be extended to
Jacobi polynomials \cite{G2} which generalizes 
the  product formulas for the
spherical functions of the projective spaces over
$\mathbb F=\mathbb R, \mathbb C$ and the quaternions $\mathbb H$.
Further prominent semisimple, rank one examples 
are the Gelfand pairs associated with
the hyperbolic spaces over
$\mathbb F=\mathbb R, \mathbb C,\mathbb H$
 with the groups
\begin{align}
\mathbb F=\mathbb R:& \quad\quad\quad G=SO_o(1,k), \quad K=SO(k)    \notag\\
\mathbb F=\mathbb C:& \quad\quad\quad G=SU(1,k), \quad K=S(U(1)\times U(k))
    \notag\\
\mathbb F=\mathbb H:& \quad\quad\quad G=Sp(1,k), \quad K=Sp(1)\times Sp(k) .
   \notag
\end{align}
Here, $D=[0,\infty[$ with $\chi(D)=\mathbb C$ where the
    spherical functions are Jacobi functions \cite{Ko}.
Besides these  classical rank-one examples, reductive examples as well as 
 several examples of higher rank 
were studied; see e.g.~\cite{HK} and references there for disk 
polynomials  as well as  \cite{KS}, \cite{RR},
 \cite{R1}, \cite{R2}, \cite{RV}, 
\cite{V5}, \cite{V6} in the higher rank case,  and \cite{HS}
 for hypergeometric
functions associated with root systems.
Moreover,  series of discrete examples were studied 
 in the setting of trees, graphs,
buildings, association schemes, Hecke pairs, and  other discrete structures;
see e.g.~\cite{B}, \cite{BI}, \cite{CST}, 
\cite{CM}, \cite{DR}, \cite{Kr1}, \cite{L}, \cite{V4} 
where sometimes the
connection to hypergroups is supressed.
This discrete setting  forms the main topic of this paper.
\raggedbottom

Before going into details, 
we return to the general setting. 
Besides  positive product formulas for  $\phi_n(\lambda,.)$  on $D$ 
originating from (\ref{prod-formula-allg}), there exist dual product formulas
for the functions  $\phi_n(.,x)$ ($x\in D$) 
 on  suitable subsets of $\chi(D)$ for the group cases. 
To explain this, 
consider the closed set $P_n(D)\subset\chi(D)$ of
spectral parameters $\lambda$ for which $\phi_n(\lambda,.)\in C(D)$ corresponds
to a positive definite
function on $G_n$. 
For $\lambda_1,\lambda_2\in P_n(D) $, then
$\phi_n(\lambda_1,.)\cdot\phi_n(\lambda_2,.)$  is also positive
definite on $G_n$, which implies that
 $\phi_n(\lambda_1,.)\cdot\phi_n(\lambda_2,.)$ is positive definite on 
the double coset hypergroup $D$.
 Therefore, by Bochner's theorem for commutative hypergroups
 (see \cite{J}), there exists a unique probability measure 
$\mu_{n,\lambda_1,\lambda_2}$ on $P_n(D)$ with the dual product formula
\begin{equation}\label{allg-dual-prod-formula}
\phi_n(\lambda_1,x)\cdot\phi_n(\lambda_2,x)=
\int_{P(D)}\phi_n(\lambda,x)\> d\mu_{n,\lambda_1,\lambda_2}(\lambda)
\quad\quad\text{for all}\quad x\in D.
\end{equation}
For  related results of harmonic analysis for Gelfand pairs and 
commutative  hypergroups we refer to \cite{BH},
\cite{Du}, \cite{F}, \cite{J}.
The dual product formula (\ref{allg-dual-prod-formula}) has an interpretation
in terms of group representations of $G_n$ of class 1, and for several classes
of examples there exist explicite formulas for 
$\mu_{n,\lambda_1,\lambda_2}$ which again can be
extended to a continuous  parameter range $n$ where usually
 the positivity remains available. This is for
instance trivial  for 
 $G_n=SO(n+1)\ltimes\mathbb R^{n+1}, \> H_n=SO(n+1)$, 
with  $P_n(D)=[0,\infty[$,  where, due to the symmetry of 
 $\Lambda_\alpha$ in $x,y$ in (\ref{classical-Bessel-funct}),
 the dual product formula agrees with the
 original one for  $\alpha\in[-1/2,\infty[$. Moreover,
 for
 $G_n=SO(n+2), \> H_n=SO(n+1)$, it is well known that
 $P_n(D)=\chi(D)=\mathbb N_0$,
 and that the dual formula (\ref{allg-dual-prod-formula})
corresponds to the well-known positive product linearization 
for the ultraspherical polynomials  for $\alpha\in]-1/2,\infty[$ which is part
  of a famous explicit positive product linearization formula
 for Jacobi polynomials 
\cite{G1}, \cite{A}.
On the other hand, for hyperbolic spaces, the positive
 dual product formula is  more involved. Here $P_n(D)$  depends on $n$
 (see \cite{FK1}, \cite{FK2}) with $[0,\infty[\subset P_n(D)\subset [0,\infty[
\cup i\cdot  [0,\infty[\subset \mathbb C$.
Moreover, the known explicit formulas for the Lebesgue densities of the
measures
$\mu_{n,\lambda_1,\lambda_2}$    via triple integrals 
for $\lambda_1,\lambda_2\in [0,\infty[$ 
e.g. in  \cite{Ko} are quite involved. 

This picture is  typical for many 
Gelfand pairs. We only mention 
the Gelfand pairs and orthogonal polynomials associated with homogeneous trees
and infinite distance transitive graphs with $D=\mathbb N_0$ and
$\chi(D)=\mathbb C$
in \cite{L}, \cite{V4}, where 
 dual product formulas are computed
 by brute force. For examples of
higher rank, the existence of positive dual product formulas seems to be
 open except for  group cases and   simple self dual examples
like the   Bessel examples  above.
Such self dual examples appear e.g.~as orbit spaces 
when  compact subgroups of $U(n)$
act on $\mathbb R^n$ which leads e.g.~to
examples associated
with matrix Bessel functions  in  \cite{R1}.

In summary, there exist many  continuous families
of commutative hypergroup structures with explicit convolutions, 
for which the multiplicative functions are known  special functions, and 
where the existence of dual product formulas is unknown except for the
group parameters.
The intention of this paper is to start some systematic research  beyond the
group cases  by using more general algebraic structures 
behind commutative hypergroup structures. 

Commutative association schemes  as in \cite{BI}, \cite{B} might form 
such algebraic objects where these
 schemes are defined as finite sets $X\ne\emptyset$
with some partition
 $D$ of $X\times X$ with certain intersection properties;
see Section 3 for details. The most prominent examples appear as homogeneous
spaces $X=G/H$ for subgroups $H$  of  finite groups $G$. It is  known that
all finite association schemes lead to  hypergroup
structures on $D$ which are commutative if and only if so are the
 association scheme. In the group case $X=G/H$, this hypergroup 
is just the double coset hypergroup $G//K$.
There exist
commutative association schemes which do not appear as homogeneous
spaces $X=G/H$ (see \cite{BI}) 
such that the class of commutative hypergroups associated with
 association schemes extends the class of commutative double coset hypergroups.
Moreover, by Section 2.10 of \cite{BI},
these commutative hypergroups  admit  positive dual product formulas.
Therefore, finite commutative association schemes form a tool to
establish dual positive product formulas for some
 finite commutative hypergroups beyond  double coset hypergroups.

The aim of this paper is to show that certain
 generalizations  of commutative association schemes also lead to
commutative hypergroups with dual positive product formulas.
We  first study 
possibly infinite, commutative association schemes where the  theory
of \cite{BI}  can be extended canonically.
This obvious extension 
 is very rigid  as the use of partitions leads to a
very few examples only.
For a further extension, we observe that all association schemes
admit  $\{0,1\}$-valued 
adjacency matrices labeled by $D$ which are 
stochastic   after some renormalization. 
 We  translate the axioms of  association schemes into a
 system of axioms for these matrices. As now the integrality conditions vanish,
we obtain more examples. We show that also in this
generalized case associated commutative hypergroups exist and that, under some
restrictions, dual positive product formulas exist; see Section 5.

We expect that our  approach may be  extended to
non-discrete spaces $X$ where families of Markov kernels 
labeled by some locally compact 
space $D$ instead of stochastic  matrices  are used.
We shall study this non-discrete generalization  in a forthcoming paper.
It covers the group cases with $X=G/H$, $D=G//H$ for Gelfand pairs $(G,H)$
as well two known continuous series of examples 
with $X=\mathbb R^2$, $D=[0,\infty[$ and $X=S^2$ the 2-sphere in $\mathbb R^3$,
 $D=[-1,1]$ due to Kingman \cite{Ki} and Bingham \cite{Bin}. The associated 
commutative hypergroups on $D$ with dual positive product formulas
will be just the hypergroups associated with Bessel functions and
ultraspherical polynomials mentioned above.
Even if for these examples the dual positive product formulas are well known,
 these examples might be a hint that generalized continuous commutative 
association schemes might form a powerful tool 
to derive the existence of  dual positive product formulas.
We hope that this approach can be applied to
certain  Heckman-Opdam 
Jacobi polynomials of type BC and hypergeometric functions of type BC 
(see \cite{HS}), 
which generalize the spherical functions of compact and noncompact
Grassmannianss and for which continuous families of commutative 
hypergroup structures exist by \cite{RR} and \cite{R2}.

This paper is organized as follows. 
In Section 2 we recapitulate some facts about commutative hypergroups.
Section  3 is then devoted to possibly infinite association schemes and 
 associated hypergroups where 
the definition  remains very close to 
the classical one in \cite{BI}.
We there in particular study the relations to the double coset hypergroups $G//H$ for
compact open subgroups $H$ of $G$ and for Hecke pairs $(G,H)$.
 In Section 4 we prove that for all commutative hypergroups 
associated with such  association schemes
there exist positive dual convolutions and dual product 
formulas at least on the support of the Plancherel measure. 
In Section 5 we  propose a discrete generalization of association 
schemes without integrality conditions.
We show that under some conditions, 
many results of Sections 3 and 4 remain valid  including the
existence of  dual product formulas.

\section{Hypergroups}

Hypergroups  form an extension  of locally compact   groups. For
 this,  remember that the group multiplication
 on a locally compact   group $G$ leads to the convolution
$\delta_x*\delta_y=\delta_{xy}$ ($x,y\in G$) of point measures. 
Bilinear, weakly continuous extension  of this convolution 
then leads to a  
 Banach-$*$-algebra structure on the Banach space $M_b(G)$ of all
signed bounded regular Borel measures with the total variation norm
$\|.\|_{TV}$.
In the case of hypergroups we only require 
 a convolution  $*$ for measures which admits 
most properties of a group
 convolution. We here recapitulate some well-known facts; for 
 details see \cite{Du}, \cite{J}, 
 \cite{BH}.

\begin{definition}
A hypergroup $(D,*)$ is a locally compact Hausdorff space $D$ with a
  weakly continuous, associative, bilinear convolution $*$ on the Banach space $M_b(D)$
  of all bounded regular Borel measures with the following properties:
\begin{enumerate}
\item[\rm{(1)}] For all $x,y\in D$,  $\delta_x*\delta_y$  is a compactly
  supported probability measure on $D$ such that the support 
  $\text{supp}\>(\delta_x*\delta_y)$ depends continuously on $x,y$ w.r.t.~the
  so-called Michael topology on the space of all compacta in $X$
 (see \cite{J} for details).  
\item[\rm{(2)}] There exists a neutral element $e\in D$ with $\delta_x*\delta_e=
\delta_e*\delta_x=\delta_x$ for  $x\in D$. 
\item[\rm{(3)}] There exists a continuous involution $x\mapsto\bar x$ on $X$
  such that for all $x,y\in D$, $e\in \text{supp}\> (\delta_x*\delta_y)$ holds
  if and only if $y=\bar x$. 
\item[\rm{(4)}] If for $\mu\in M_b(D)$, $\mu^-$ denotes the
  image of $\mu$ under the involution, then
  $(\delta_x*\delta_y)^-= \delta_{\bar y}*\delta_{\bar x}$ for all $x,y\in D$.
\end{enumerate}
A hypergroup is called commutative if the convolution $*$ is commutative. 
It is called symmetric if the involution is the identity. 
\end{definition}

\begin{remarks}
\begin{enumerate}
\item[\rm{(1)}]  The identity $e$ and the involution $.^-$ above
are  unique.
\item[\rm{(2)}] Each symmetric hypergroup is  commutative.
\item[\rm{(3)}] For each hypergroup $(D,*)$, $(M_b(D),*)$ is a
  Banach-$*$-algebra with the involution $\mu\mapsto\mu^*$ with
$\mu^*(A):=\overline{\mu(A^-)}$   for Borel sets $A\subset D$.
\item[\rm{(4)}] For a second countable locally compact space $D$, the Michael
  topology agrees with the well-known Hausdorff topology; see \cite{KS}.
\end{enumerate}
\end{remarks}

The most prominent examples are  
double coset hypergroups $G//H:=\{HgH:\> g\in G\}$
for compact subgroups $H$  of 
locally compact groups $G$:

\begin{example}\label{Doublecoset} Let $H$ be  a compact
 subgroup of a locally compact group
  $G$ with identity $e$ and with
the unique normalized Haar measure $\omega_H\in M^1(H)\subset M^1(G)$, i.e.
$\omega_H$ is a probability measure.
Then the space
$$M_b(G||H):=\{\mu\in M_b(G):\> \mu=\omega_H*\mu*\omega_H\}$$
of all $H$-biinvariant measures in $M_b(G)$ is a Banach-$*$-subalgebra of
$M_b(G)$. With the quotient topology, 
 $G//H$  is a locally compact space, and the
canonical
projection $p_{G//H}:G\to G//H$ is continuous, proper and open. 
Now consider the push forward (or image-measure mapping)
  $\tilde p_{G//H}:M_b(G)\to M_b(G//H)$  with 
$\tilde p_{G//H}(\mu)(A)=\mu(p_{G//H}^{-1}(A))$ for $\mu\in M_b(G)$
and Borel sets $A\subset G//H$. It is easy to see that 
 $\tilde p_{G//H}$ is  an isometric isomorphism between the Banach
 spaces  $M_b(G||H)$ and $M_b(G//H)$ w.r.t.~the total variation
 norms, and that the  transfer of the  convolution on
 $M_b(G||H)$  to   $M_b(G//H)$
 leads to a hypergroup $(G//H, *)$ with  identity $HeH$ 
 and  involution $HgH\mapsto Hg^{-1}H$. For details see  \cite{J}.
\end{example}

Let us consider some typical discrete double coset hypergroups $G//H$:

\begin{example}\label{graphauto}
Let $\Gamma$ be the vertex set  of a  locally finite,
connected  undirected graph with the graph metric $d:\Gamma\times\Gamma\to
\mathbb N_0:=\{0,1,\ldots\}$. A bijective mapping $g:\Gamma\to\Gamma$ is
called an automorphism of $\Gamma$ if 
$d(g(a),g(b))=d(a,b)$ for all $a,b\in\Gamma$.
Clearly, the set  $Aut(\Gamma)$ of all automorphisms is a topological group w.r.t.~the topology
of pointwise convergence, i.e., we regard $Aut(\Gamma)$  as  subspace of
$\Gamma^\Gamma$  equipped with the product topology.
Assume now that  $Aut(\Gamma)$ 
acts transitively on  $\Gamma$. 
It is well-known and easy to see that
  $Aut(\Gamma)$  is a totally disconnected locally compact group
which contains the stabilizer subgroup $H_x\subset G$ of any $x\in \Gamma$  as
a compact open  subgroup.
  $\Gamma$ can be identified with
 $Aut(\Gamma)/H_x$, and
 the discrete orbit  space $\Gamma^{H_x}:=\{H_x(y):\> y\in
\Gamma\}$   with the
discrete double coset space $Aut(\Gamma)//H_x$.
\end{example}

We  also consider another kind of double coset hypergroups:

\begin{example}\label{hecke1}
Let $G$ be a discrete group with some subgroup $H$.
$(G,H)$ is called a Hecke pair if the so-called Hecke condition holds, i.e., if
each double coset $HgH$ ($g\in G$) decomposes into an at most finite number of 
right cosets $g_1H,\ldots, g_{ind(HgH)}H$ where  $ind(HgH)\in\mathbb
N$ is called the (right-)index of $HgH$. 
Hecke pairs are studied e.g.~in \cite{Kr1}, \cite{Kr2} where  left coset are taken.

For  Hecke pairs,  $G//H$ carries a discrete hypergroup
structure due to \cite{Kr2}. To describe the  associated convolution,
take $a,b\in G$ and consider the disjoint decompositions
$HaH=\cup_{i=1}^n a_iH$, $HbH=\cup_{j=1}^m b_jH$. If we put 
$$\mu(HcH):=|\{(i,j):\> a_ib_jH= cH\}|\in\mathbb N_0,$$
then $\mu(HcH)$ is independent of the representative $c$ of $HcH$, and
$$\delta_{HaH}*\delta_{HbH}:= \sum_{HcH\in G//H}
\frac{\mu(HcH)\cdot ind(HcH)}{ ind(HaH) \cdot ind(HbH)}\delta_{HcH}
\quad\quad(a,b\in G)$$
generates a hypergroup structure on $G//H$.
\end{example}

The notion of  Haar
measures on hypergroups is  similar to  groups:

\begin{definition} Let $(D,*)$ be a hypergroup,
 $x,y\in D$, and $f\in C_c(D)$ a continuous function with compact support.
We write $_xf(y):=f(x*y):=\int_K f\>
d(\delta_x*\delta_y)$ and $f_x(y):=f(y*x)$  where, 
by the hypergroup axioms, $f_x,\>_xf  \in C_c(D)$ holds.

A non-trivial positive  Radon measure $\omega\in M^+(D)$ 
is called a left or right Haar measure if 
$${\int_D} {_xf}\> d\omega=
\int_D f\> d\omega     \quad\text{or}\quad
{\int_D} {f_x}\> d\omega=
\int_D f\> d\omega \quad\quad (f\in C_c(D), \> x\in D)$$ 
  respectively. $\omega$ is 
 called a Haar measure if
it is a left and  right Haar measure.
If $(D,*)$  admits a  Haar measure, then it is called unimodular. 
\end{definition} 

The uniqueness of left and right  Haar measures
 and their existence for
particular classes  are known for a long time by 
Dunkl, Jewett, and Spector;
see \cite{BH} for details. The general existence was settled only
 recently by Chapovsky \cite{Ch}:

\begin{theorem} Each hypergroup admits a left and a right Haar measure.
Both  are unique up to normalization.
\end{theorem}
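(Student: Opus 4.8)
The plan is to reduce both assertions to statements about positive linear functionals on $C_c(D)$. By the Riesz representation theorem a left (resp.\ right) Haar measure corresponds exactly to a nonzero positive linear functional $I$ on $C_c(D)$ that is left- (resp.\ right-) invariant, i.e.\ $I({}_xf)=I(f)$ (resp.\ $I(f_x)=I(f)$) for all $x\in D$ and $f\in C_c(D)$; here axiom (1) guarantees that $(x,y)\mapsto f(x*y)$ is jointly continuous and that ${}_xf,f_x\in C_c(D)$, so all integrals make sense. It therefore suffices to (a) prove that such a left-invariant functional exists and (b) prove it is unique up to a positive scalar; a right Haar measure is then obtained by transporting the left Haar measure through the involution and axiom (4).

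I would treat uniqueness first, since it follows the classical Fubini pattern of Dunkl, Jewett and Spector (see \cite{BH}). Given two left Haar measures $\omega_1,\omega_2$, fix $g\in C_c^+(D)$ with $\int_D g\,d\omega_2\neq 0$ and, for arbitrary $f\in C_c^+(D)$, consider the double integral $\iint_{D\times D} f(x)\,g(\bar x * y)\,d\omega_1(x)\,d\omega_2(y)$; the joint continuity and compact support needed for Fubini again come from axiom (1). Integrating first in $y$ and using left-invariance of $\omega_2$, the inner integral $\int_D g(\bar x * y)\,d\omega_2(y)=\int_D({}_{\bar x}g)\,d\omega_2=\int_D g\,d\omega_2$ is independent of $x$, so the double integral equals $\big(\int_D f\,d\omega_1\big)\big(\int_D g\,d\omega_2\big)$. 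Reversing the order and performing the analogous reduction, after the involution substitution licensed by axioms (3)--(4) and the accompanying bookkeeping of the modular factor, forces the ratio $\int_D f\,d\omega_1\big/\int_D f\,d\omega_2$ to be independent of $f$, which is the claim.

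For existence I would adapt A.\ Weil's covering-number construction of Haar measure to the hypergroup setting. For $f,g\in C_c^+(D)$ set
$$(f:g):=\inf\Big\{\sum_{i=1}^n c_i:\ f\le \sum_{i=1}^n c_i\,{}_{s_i}g,\ c_i\ge 0,\ s_i\in D\Big\}.$$
One checks the usual elementary properties: $0<(f:g)<\infty$ (finiteness uses compactness of $\mathrm{supp}\,f$ together with the fact that, by axioms (1) and (3), for every $y$ there is an $s$ with ${}_sg(y)>0$), subadditivity and positive homogeneity in $f$, and transitivity $(f:h)\le (f:g)\,(g:h)$. For transitivity one composes two coverings and uses associativity to rewrite ${}_{s_i}{}_{t_j}h=\int_D {}_u h\,d(\delta_{s_i}*\delta_{t_j})(u)$, then approximates this probability average by finite Riemann sums of translates. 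Fixing a reference $f_0\in C_c^+(D)$, the ratios $I_h(f):=(f:h)/(f_0:h)$ lie in the fixed compact interval $[(f_0:f)^{-1},(f:f_0)]$, and a Tychonoff/compactness argument over a net of test functions $h$ shrinking toward the neutral element $e$ produces a limit $I$ that is positive, homogeneous and, in the limit, additive; via Riesz this yields a nonzero positive measure on $D$.

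The main obstacle, and the reason the general existence was settled only recently by Chapovsky \cite{Ch}, lies in the invariance step. In the group case one uses $({}_sf:g)=(f:g)$, which rests on the invertibility of translations. In a hypergroup the composition of translations is only an average, ${}_s{}_{s_i}g=\int_D {}_u g\,d(\delta_s*\delta_{s_i})(u)$, so from a covering $f\le\sum_i c_i\,{}_{s_i}g$ one obtains, after the same Riemann-sum approximation, \emph{only} the one-sided estimate $({}_sf:g)\le (f:g)$; the reverse inequality fails because $\delta_{\bar s}*\delta_s$ carries no atom at $e$, so $f$ cannot be recovered from ${}_sf$. Consequently the covering construction by itself delivers merely a sub-invariant functional, $\int_D {}_sf\,d\omega\le\int_D f\,d\omega$. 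Upgrading this sub-invariance to genuine invariance is the crux of the matter: the plan here is to exploit the involution together with an additional symmetrisation/fixed-point argument on the limit functionals, which is precisely the delicate technical point carried out in \cite{Ch} and which I would follow for the detailed proof.
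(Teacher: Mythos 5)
The paper does not prove this theorem at all: it is quoted as a known result, with existence in full generality attributed to Chapovsky \cite{Ch} and the uniqueness (and the earlier partial existence results of Dunkl, Jewett and Spector) to the references \cite{BH}, \cite{J}. Measured against that, your proposal is an honest and well-oriented sketch of the standard strategy, and you correctly locate the genuine difficulty; but as a proof it is incomplete at exactly the decisive point. In the existence part you establish (in outline) only the sub-invariance $\int_D {}_sf\,d\omega\le\int_D f\,d\omega$ of the Weil-type limit functional, observe correctly that the group argument for the reverse inequality breaks down because $\delta_{\bar s}*\delta_s$ need not charge $\{e\}$, and then explicitly defer the upgrade from sub-invariance to invariance to \cite{Ch}. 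That upgrade is not a routine symmetrisation; it is the entire content of Chapovsky's theorem, so the proposal does not prove existence -- it reduces it to the cited result, which is precisely what the paper itself does, only with more scaffolding around the citation.

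Two smaller points. First, in the uniqueness argument the forward Fubini computation is fine, but the reverse-order reduction is where all the work lies: the inner integral $\int_D f(x)\,g(\bar x*y)\,d\omega_1(x)$ does not simplify by a change of variables as in the group case, and the ``involution substitution licensed by axioms (3)--(4) and the accompanying bookkeeping of the modular factor'' is exactly the nontrivial step of Jewett's proof (one needs the identity relating $\int f\,d\omega$ to $\int f^-\,$ against the modular function, which itself has to be established). As written this is a gesture, not an argument. Second, for the purposes of this paper the theorem is only ever applied to \emph{discrete} hypergroups, where Example \ref{example-haar}(1) gives the Haar measure by the explicit formula $\omega_l(\{x\})=1/(\delta_{\bar x}*\delta_x)(\{e\})$ and both existence and uniqueness are elementary; if you wanted a self-contained proof adequate for this paper's needs, that one-line verification would be the more economical route than the general locally compact case.
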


\begin{examples}\label{example-haar}
\begin{enumerate} 
\item[\rm{(1)}] Let  $(D,*)$ be a discrete hypergroup. Then, by \cite{J},
 left and right Haar measures
 are given by 
$$\omega_l(\{x\})= \frac{1}{(\delta_{\bar x}*\delta_x)(\{e\})}, \quad
\omega_r(\{x\})= \frac{1}{(\delta_{ x}*\delta_{\bar x})(\{e\})}, \quad
\quad(x\in D).$$
\item[\rm{(2)}] If $(G//H,*)$ is a double coset hypergroup and  $\omega_G$
 a left Haar measure of $G$, then its projection to
 $G//H$ is a left Haar measure of $(G//H,*)$. 
 \end{enumerate}
\end{examples}

We next recapitulate some facts on Fourier analysis on commutative hypergroups
from \cite{BH}, \cite{J}.
For the rest of Section 2 let $(D,*)$ be a commutative hypergroup with Haar
measure $\omega$. For $p\ge1$ consider the $L^p$-spaces
$L^p(D):=L^p(D,\omega)$. Moreover $C_b(D)$ and $C_o(D)$ are the Banach spaces
of all bounded continuous functions on $D$ and 
those which vanish at infinity respectively.

\begin{definition}
\begin{enumerate} 
\item[\rm{(1)}] The dual space of $(D,*)$ is defined as
$$\hat D:=\{\alpha\in C_b(D):\>\> \alpha\not\equiv 0,\>\>
\alpha(x*\bar y)=\alpha(x)\cdot \overline{\alpha(y)} \>\>\text{for all}\>\>
x,y\in D\}.$$
$\hat D$ is a locally compact space w.r.t.~the topology of compact-uniform
convergence. Moreover, if $D$ is discrete, then $\hat D$ is compact. All
characters $\alpha\in\hat D$ satisfy $\|\alpha\|_\infty=1$ with $\alpha(e)=1$.
\item[\rm{(2)}] For $f\in L^1(D)$ and $\mu\in M_b(D)$, the Fourier transforms
  are defined by
$$\hat f(\alpha):=\int_D f(x)\overline{\alpha(x)}\> d\omega(x),\quad
\hat \mu(\alpha):=\int_D \overline{\alpha(x)}\> d\mu(x) 
\quad (\alpha\in\hat D)$$
with $\hat f\in C_o(\hat D)$, $\hat\mu\in C_b(\hat D)$ and 
$\|\hat f\|_\infty\le\|f\|_1$, $\|\hat \mu\|_\infty\le\|\mu\|_{TV}$.
\item[\rm{(3)}] There exists a unique positive measure $\pi\in M^+(\hat D)$
  such that the  Fourier transform
$.^\wedge: L^1(D)\cap L^2(D) \to C_0(\hat D)\cap L^2(\hat D,\pi)$
is an isometry. $\pi$ is called the Plancherel measure on $\hat D$.

Notice that, different from l.c.a.~groups, the support
$S:=supp\> \pi$ may be a proper closed subset of $\hat D$.
Quite often, we even have ${\bf 1}\not\in S$.
\item[\rm{(4)}] For $f\in L^1(\hat D,\pi)$, $\mu\in M_b(\hat D)$, the
  inverse
Fourier transforms are given by
$$\check f(x):=\int_S f(\alpha) {\alpha(x)}\> d\pi(\alpha),\quad
\check \mu(x):=\int_{\hat D} {\alpha(x)}\> d\mu(\alpha) 
\quad (x\in D)$$
with $\check f\in C_0(D)$, $\check\mu\in C_b(D)$
and 
$\|\check f\|_\infty\le\|f\|_1$, $\|\check \mu\|_\infty\le\|\mu\|_{TV}$.
\item[\rm{(5)}] $f\in C_b(D)$ is called positive definite 
on the hypergroup $D$ if for all $n\in\mathbb N$, $x_1,\ldots, x_n\in D$ and
  $c_1,\ldots,c_n\in\mathbb C$,
$\sum_{k,l=1}^n c_k \bar c_l \cdot f(x_k*\bar x_l)\ge0.$
Obviously, all characters $\alpha\in\hat D$ are positive definite.
 \end{enumerate}\end{definition}

We collect some essential well-known results:

\begin{facts}\label{facts-hy}
\begin{enumerate} 
\item[\rm{(1)}] (Theorem of Bochner, \cite{J}) A function $f\in C_b(D)$ is positive
  definite if and only of $f=\check \mu$ for some $\mu\in M_b^+(\hat D)$.
In this case, $\mu$ is a probability measure if and only if
$\check\mu(e)=1$.
\item[\rm{(2)}] For $f,g\in L^2(D)$, the convolution product 
$f*g(x):=\int f(x*\bar  y)g(y)\> d\omega(y)$ ($x\in D$)
 satisfies $f*g\in C_0(D)$. Moreover,
for $f\in L^2(D)$,  $f^*(x)=\overline{f(\bar x)}$ satisfies
  $f^*\in L^2(D)$, and 
 $f*f^*\in C_0(D)$ is positive definite; see \cite{J}, \cite{BH}. 
\item[\rm{(3)}] A function $f\in C_b(D)$ is the inverse Fourier transform
  $\check\mu$ for some $\mu\in M_b^+(\hat D)$ with $supp\>\mu\subset S$ if and
  only if $f$ is the compact-uniform limit of positive definite functions of
  the form $h*h^*$, $h\in C_c(D)$; see \cite{V2}.
\item[\rm{(4)}] Let $\alpha\in\hat D$. Then $\alpha\in S$ if and only if 
 $\alpha$ is the compact-uniform limit of positive definite functions of
  the form $h*h^*$, $h\in C_c(D)$; see \cite{V2}.
\item[\rm{(5)}] There exists precisely one positive character $\alpha_0\in S$
by \cite{V1}, \cite{BH}.
\item[\rm{(6)}] If $\mu\in M^1(\hat D)$ satisfies $\check\mu\ge0$ on $D$, then 
its support $supp\>\mu$ contains at least one positive character; see \cite{V3}. 
\end{enumerate}
\end{facts}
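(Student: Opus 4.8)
The plan is to establish the six items by separating the elementary computations, which I would carry out directly from the definitions, from the deeper spectral-theoretic statements, which I would reduce to the cited works of Jewett \cite{J}, Bloom--Heyer \cite{BH}, and the author \cite{V1}, \cite{V2}, \cite{V3}. First I would dispose of the easy half of the Bochner theorem (1): if $f=\check\mu$ with $\mu\in M_b^+(\hat D)$, then substituting the defining integral and using the character identity $\alpha(x_k*\bar x_l)=\alpha(x_k)\overline{\alpha(x_l)}$ gives
$$\sum_{k,l=1}^n c_k\bar c_l\, f(x_k*\bar x_l)=\int_{\hat D}\Big|\sum_{k=1}^n c_k\,\alpha(x_k)\Big|^2\,d\mu(\alpha)\ge0,$$
so $f$ is positive definite, and since $\alpha(e)=1$ we get $\check\mu(e)=\mu(\hat D)$, which yields the probability-measure characterization.

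For (2), I would first note the pointwise identity $(f*f^*)(x)=\langle {}_x f,\,f\rangle_{L^2(D)}$, which follows from the definitions $f^*(x)=\overline{f(\bar x)}$ and ${}_x f(y)=f(x*y)$ together with the involution-invariance of the commutative Haar measure $\omega$. This representation of $f*f^*$ as a matrix coefficient of the translation action is exactly what is needed to rewrite the positive-definiteness quadratic form as a nonnegative expression, and I would fill in that standard hypergroup computation as in \cite{J}, \cite{BH}. The inclusion $f*g\in C_0(D)$ I would obtain by approximating $f,g\in L^2(D)$ by functions in $C_c(D)$ and combining the norm estimate $\|f*g\|_\infty\le\|f\|_2\|g\|_2$ with the density of $C_c(D)$ and the continuity properties built into the hypergroup axioms.

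The main obstacle will be the converse half of (1). Here I would pass to the commutative Banach-$*$-algebra $L^1(D)$, whose symmetric Gelfand spectrum is identified with $\hat D$ via $f\mapsto\hat f$ (using that characters are bounded with $\|\alpha\|_\infty=1$). A positive definite $f$ induces, essentially by rewriting the defining inequality through (2), a positive linear functional on $L^1(D)$; applying the Gelfand--Naimark representation together with a Riesz-type representation of this functional on the spectrum then produces a positive measure $\mu$ on $\hat D$ with $f=\check\mu$. The delicate points are the boundedness of $\mu$ and the verification that the spectrum is exactly $\hat D$ rather than a larger set of semicharacters; I would control these using the hypergroup axioms and the detailed treatment in \cite{J}.

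Finally, (3)--(6) I would treat as refinements of the Plancherel theorem rather than reprove from scratch. The arithmetic core is the Plancherel identity $\widehat{h*h^*}=|\hat h|^2$ for $h\in C_c(D)$, which shows that the Bochner measure of $h*h^*$ equals $|\hat h|^2\,\pi$ and hence is supported in $S=\operatorname{supp}\pi$; taking compact-uniform limits then gives one inclusion of (3), and the reverse inclusion is an approximation argument, for which I would cite \cite{V2}. Statement (4) is the specialization of (3) to $f=\alpha$ and $\mu=\delta_\alpha$. For the existence and uniqueness of the positive character in $S$ (5) I would invoke \cite{V1}, \cite{BH}, and (6) I would deduce from (5) by a barycentre argument under the hypothesis $\check\mu\ge0$, following \cite{V3}. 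I expect the genuinely hard analytic input to sit in the spectral-synthesis results behind (3)--(6), which is exactly why I would lean on those cited sources there.
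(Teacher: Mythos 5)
The paper offers no proof of these Facts: they are quoted as known results from the cited literature (\cite{J}, \cite{BH}, \cite{V1}, \cite{V2}, \cite{V3}), which is exactly where you also send the genuinely hard steps (the converse half of Bochner, the characterization of $S$ via limits of $h*h^*$, and items (5)--(6)). The elementary parts you do supply --- the easy direction of Bochner via $\alpha(x*\bar y)=\alpha(x)\overline{\alpha(y)}$ and $\check\mu(e)=\mu(\hat D)$, the identity $(f*f^*)(x)=\langle {}_xf,f\rangle_{L^2(D,\omega)}$ using involution-invariance of $\omega$, and $f*g\in C_0(D)$ via $\|f*g\|_\infty\le\|f\|_2\|g\|_2$ and density of $C_c(D)$ --- are correct, so your proposal is consistent with the paper's purely citational treatment.
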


In contrast to l.c.a.~groups, products of positive definite
functions on $D$ are not necessarily positive definite; see e.g.~Section
9.1C of \cite{J} for an example with $|D|=3$. However,sometimes
 this positive definiteness of products is
available. 

If for all $\alpha,\beta\in \hat D$ (or a subset of  $\hat D$ like
$S$) the products $\alpha\beta$ are positive definite, 
then by Bochner's theorem \ref{facts-hy}(1), there are probability measures 
$\delta_\alpha\hat*\delta_\beta\in M^1(\hat D)$ with 
$(\delta_\alpha\hat*\delta_\beta)^\vee=\alpha\beta$, i.e., we obtain dual 
positive product formulas as claimed in Section 1. Under additional
conditions, $(\hat D,\hat*)$ then  carries a dual hypergroup structure
with ${\bf 1}$ as identity and complex conjugation as involution. This  for
instance holds for all compact commutative double coset hypergroups $G//H$
by  \cite{Du}. For non-compact Gelfand pairs $(G,H)$
 we have  dual positive  
convolutions on $S$; see \cite{J}, \cite{V3}. These convolutions usually do not
generate a dual hypergroup structure, and sometimes $\alpha\beta$
is not positive definite on $D$ for some  $\alpha,\beta\in\hat D$; see Theorem 
\ref{podi} for an example.

\section{Discrete association schemes}

In this section we extend the classical notion of finite association schemes 
 in a
natural way. As 
references for finite association schemes 
 we recommend \cite{BI}, \cite{B}.

\begin{definition}\label{def-discrete-asso}
 Let $X, D$ be nonempty, at most countable 
sets and $(R_i)_{i\in D}$ a disjoint
  partition of $X\times X$ with $R_i\ne\emptyset$ for $i\in D$ and the
  following properties:
\begin{enumerate}
\item[\rm{(1)}] There exists $e\in D$ with $R_e=\{(x,x): x\in X\}$.
\item[\rm{(2)}] There exists an involution $i\mapsto \bar i$ on $D$ such that
  for $i\in D$,
$R_{\bar i}=\{(y,x):\> (x,y)\in R_i\}$.
\item[\rm{(3)}] For all $i,j,k\in D$ and $(x,y)\in R_k$, the number
$$p_{i,j}^k:=|\{z\in X:\> (x,z)\in R_i\>\> {\rm and }\>\> (z,y)\in R_j\}|$$
is finite and independent of $(x,y)\in R_k$.
\end{enumerate}
Then $\Lambda:=(X,D, (R_i)_{i\in D})$ is called an association scheme with
intersection numbers $(p_{i,j}^k)_{i,j,k\in D}$ and identity $e$. 

An association scheme is called commutative if $p_{i,j}^k=p_{j,i}^k$
 for all $i,j,k\in D$. It is called symmetric (or hermitian) if
the involution on $D$ is the identity.
Moreover, it is called finite, if so are $X$ and $D$.
\end{definition}

Finite association schemes above are obviously precisely 
association schemes in the sense of the monographs \cite{BI}.

Association schemes have the following interpretation: We
regard $X\times X$ as set of all directed paths from points in $X$
 to points in $X$. The set of paths is labeled by $D$ which might be colors,
 lengths, or difficulties of  paths. 
Then $R_e$ is the set of trivial paths, and
 $R_{\bar i}$ is the set of all reversed paths in $R_i$. Axiom (3) 
 is  some kind of a symmetry condition and  the central part of the
 definition.

Association schemes may be described with the aid of adjacency matrices:

\begin{definition}\label{def-adjacency-matrix}
The  adjacency 
matrices $A_i\in \mathbb R^{X\times X}$ ($i\in D$) of an association scheme 
 $(X,D, (R_i)_{i\in D})$ are given by
$$(A_i)_{x,y}:=
\left\{ \begin{array}{cc}
\displaystyle 1 &
 {\rm if}\>\> (x,y)\in R_i
\\     0&  {\rm otherwise}\\     \end{array} \right. 
 \quad\quad\quad \quad(i\in D,\>
x,y\in X).$$
The adjacency matrices  have the following obvious properties: 
\begin{enumerate}
\item[\rm{(1)}] $A_e$ is the identity matrix $I_X$.
\item[\rm{(2)}] $\sum_{i\in D} A_i$ is the matrix $J_X$ whose entries are all
  equal to 1.
\item[\rm{(3)}] $A_i^T = A_{\bar i}$ for $i\in D$.
\item[\rm{(4)}] For all $i\in D$ and all rows and columns of $A_i$, all
  entries are equal to
  zero except for finitely many cases (take $k=e$, $j=\bar i$ in \ref{def-discrete-asso}(3)!).
\item[\rm{(5)}] For $i,j\in D$, 
 $A_iA_j= \sum_{k\in D} p_{i,j}^k A_k$.
\item[\rm{(6)}] An association scheme  $\Lambda$ is commutative if and only
  if $A_iA_j=A_jA_i$ for all  $i,j\in D$.
\item[\rm{(7)}] An association scheme  $\Lambda$ is symmetric if and
  only if  all $A_i$ are symmetric. In particular, each  symmetric
 association scheme is commutative.
\end{enumerate}
\end{definition}

\begin{definition}\label{def-valency}
Let $(X,D, (R_i)_{i\in D})$  be an  association scheme.
 The valency of $R_i$ or $i\in D$ is
defined as  $\omega_i:=p_{i,\bar i}^e$. Obviously,
 the $\omega_i$ satisfy
\begin{equation}\label{deutung-omegai}
\omega_i=|\{z\in X:\> (x,z)\in R_i\}|\in\mathbb N\end
{equation}
for all $x\in X$.
In particular,  $\omega_e=1$,
 and
\begin{equation}\label{valency-summation-all}
\textstyle |X|=\sum_{i\in K}\omega_i\in\mathbb N\cup\{\infty\}.
\end{equation}
\end{definition}

\begin{remark}\label{remark-asso-stoch}
For   $i\in D$, the  renormalized matrices
  $S_i:= \frac{1}{\omega_i}A_i\in \mathbb R^{X\times X}$ are stochastic, i.e.,
  all  rows sum are equal to 1 by (\ref{deutung-omegai}).
Notice that also all column  sums are finite by
\ref{def-adjacency-matrix}(4). Moreover, by
\ref{def-adjacency-matrix}(5), the stochastic matrices $S_i$ satisfy
\begin{equation}\label{rel-stoch-matrices}
\textstyle S_iS_j= \sum_{k\in D} \frac{ \omega_k}{\omega_i\omega_j}
p_{i,j}^k S_k  \quad\quad\text{ for}  \quad\quad i,j\in D.
\end{equation}
Since  both sides of (\ref{rel-stoch-matrices}) are stochastic,
 absolute convergence leads to
\begin{equation}\label{sum-one}
\textstyle
\sum_{k\in D}\frac{ \omega_k}{\omega_i\omega_j} p_{i,j}^k=1 \quad\text{ for}  \quad i,j\in D.
\end{equation}

The formulas (\ref{rel-stoch-matrices}) and (\ref{sum-one}) are the
starting point in Section 5 for the extension of association schemes to
a continuous setting.
\end{remark}

We now collect some relations about the intersection numbers and
valencies:

\begin{lemma}\label{multass} For all $i,j,k, l, m\in D$:
\begin{enumerate}
\item[\rm{(1)}] $p_{e,i}^j=p_{i,e}^j=\delta_{i,j}$ and 
$p_{i,j}^e= \omega_i\delta_{i,\bar j}$.
\item[\rm{(2)}] $p_{i,j}^l =p_{\bar j,\bar i}^{\bar l}$.
\item[\rm{(3)}] $\sum_{j\in K} p_{i,j}^l =\omega_i$ for all $l\in D$, and, in
  particular, for all $i,l\in D$, $p_{i,j}^l\ne 0$ holds for finitely many
  $j\in K$ only.
\item[\rm{(4)}] $\omega_l\cdot  p_{i,j}^l=\omega_i\cdot  p_{l,\bar j}^i$ and 
$\omega_{\bar j}\cdot p_{\bar i, l}^j = \omega_{\bar l}\cdot  p_{i,j}^l$.
\item[\rm{(5)}] $\sum_{l\in K} \omega_l\cdot  p_{i,j}^l= 
\omega_i\cdot\omega_j\>\>$ and
  $\>\>\sum_{l\in K} \omega_{\bar l}\cdot  p_{i,j}^l=
 \omega_{\bar i}\cdot\omega_{\bar j}$
\item[\rm{(6)}] $\sum_{l\in K}  p_{i,j}^l\cdot  p_{l,k}^m = 
\sum_{l\in K}  p_{j,k}^l p_{l,k}^m$.
\item[\rm{(7)}] If $p_{i,j}^k>0$, then 
$\displaystyle \frac{\omega_k}{\omega_{\bar k}} = 
 \frac{\omega_i}{\omega_{\bar i}}\cdot  \frac{\omega_j}{\omega_{\bar j}}$.
\end{enumerate}
\end{lemma}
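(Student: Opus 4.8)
\section*{Proof proposal}

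The plan is to treat the seven identities in increasing order of difficulty, since each later relation leans on the earlier ones together with two elementary techniques: reading off directly the defining count $p_{i,j}^k$, and double counting configurations of short directed paths in $X$ with one endpoint pinned. Throughout I fix, for the relation in question, a representative pair $(x,y)$ in the appropriate $R_m$; by axiom \ref{def-discrete-asso}(3) every count below is finite and independent of the chosen representative, and finiteness of all sums is guaranteed because the valencies $\omega_i$ are finite and, for fixed $i,j$, only finitely many types $l$ occur among the pairs $(x,y)$ reachable by a two-step path of types $i$ then $j$.

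Parts (1)--(3) are immediate from Definition \ref{def-discrete-asso}. For (1), if $(x,y)\in R_j$ then $(x,z)\in R_e$ forces $z=x$, so $p_{e,i}^j$ records whether $(x,y)\in R_i$, which by the partition property happens iff $i=j$; the case $p_{i,e}^j$ is symmetric, and for $p_{i,j}^e$ one puts $x=y$ and observes that $(x,z)\in R_i$, $(z,x)\in R_j$ means $(x,z)\in R_i\cap R_{\bar j}$, nonempty only for $i=\bar j$, whence the count is $\omega_i$. For (2), reversing every path sends the set counted by $p_{i,j}^l$ at $(x,y)\in R_l$ bijectively onto the set counted by $p_{\bar j,\bar i}^{\bar l}$ at $(y,x)\in R_{\bar l}$. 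For (3), fix $(x,y)\in R_l$; each $z$ with $(x,z)\in R_i$ produces a pair $(z,y)$ of a unique type $j$, so summing $p_{i,j}^l$ over $j$ counts all such $z$, namely $\omega_i$, and finiteness of $\omega_i$ forces $p_{i,j}^l\neq 0$ for only finitely many $j$.

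For (4) I would fix $x\in X$ and double count the set $T=\{(y,z):(x,y)\in R_l,\ (x,z)\in R_i,\ (z,y)\in R_j\}$: counting $y$ first gives $|T|=\omega_l\,p_{i,j}^l$, while counting $z$ first gives $|T|=\omega_i\,p_{l,\bar j}^i$, which is the first identity; pinning $y$ instead of $x$ and arguing the same way yields the second. Summing the first identity over $l$ (equivalently, dropping the constraint on the type of $(x,y)$) gives $\omega_i\omega_j=\sum_l\omega_l p_{i,j}^l$, the first half of (5), and the second half follows by applying this to $(\bar j,\bar i)$ and rewriting with (2). Relation (6) is cleanest as the associativity $(A_iA_j)A_k=A_i(A_jA_k)$ read through \ref{def-adjacency-matrix}(5): comparing the coefficient of $A_m$ on both sides, using that the $A_m$ have pairwise disjoint supports and are therefore linearly independent, gives $\sum_l p_{i,j}^l p_{l,k}^m=\sum_l p_{j,k}^l p_{i,l}^m$ (the intended right-hand side); alternatively one counts four-point paths $x\to z\to w\to y$ with $(x,y)\in R_m$ and groups them by the type of $(x,w)$ versus the type of $(z,y)$.

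The one genuinely delicate step is (7), a per-term statement valid only where $p_{i,j}^k>0$, so no summation is available and I must combine the two forms of (4) with the reversal symmetry (2). The two halves of (4) applied to $p_{i,j}^k$ read $\omega_k p_{i,j}^k=\omega_i p_{k,\bar j}^i$ and $\omega_{\bar k}p_{i,j}^k=\omega_{\bar j}p_{\bar i,k}^j$; since $p_{i,j}^k>0$ both $p_{k,\bar j}^i$ and $p_{\bar i,k}^j$ are positive, so dividing yields $\omega_k/\omega_{\bar k}=\omega_i p_{k,\bar j}^i/(\omega_{\bar j}p_{\bar i,k}^j)$. It then remains to prove the auxiliary identity $\omega_{\bar i}\,p_{k,\bar j}^i=\omega_j\,p_{\bar i,k}^j$, which I would obtain by applying the second form of (4) to $p_{k,\bar j}^i$ to get $\omega_{\bar i}p_{k,\bar j}^i=\omega_j p_{\bar k,i}^{\bar j}$ and then invoking (2) in the form $p_{\bar i,k}^j=p_{\bar k,i}^{\bar j}$. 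Substituting this gives $\omega_k/\omega_{\bar k}=(\omega_i/\omega_{\bar i})(\omega_j/\omega_{\bar j})$, as claimed. I expect this bookkeeping---keeping the bars and the order of indices straight while chaining (4) and (2)---to be the main source of error, whereas the finiteness bookkeeping in the infinite-$X$ case is routine once one notes that every count takes place inside a finite neighbourhood of a fixed point.
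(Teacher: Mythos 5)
Your proof is correct and follows essentially the same route as the paper's: your double counts of path configurations are exactly the combinatorial reading of the paper's adjacency-matrix manipulations (your set $T$ in (4) is the diagonal entry of $A_iA_jA_{\bar l}$, your sums in (3) and (5) correspond to multiplying by $J_X$), and your derivation of (7) by dividing the two halves of (4) and then invoking (2) is the same chain of identities the paper writes multiplicatively. You are also right that the printed right-hand side of (6) is a typo for $\sum_{l} p_{j,k}^l\, p_{i,l}^m$; the associativity argument $(A_iA_j)A_k=A_i(A_jA_k)$ used both by you and by the paper yields exactly the identity you state.
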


\begin{proof} Part (1) is obvious, and  part (2) follows from 
$$\textstyle\sum_l  p_{\bar j,\bar i}^{\bar l} A_{\bar l} = A_j^TA_i^T=(A_iA_j)^T=
\sum_l p_{i,j}^l A_{l}^T= \sum_l p_{i,j}^l A_{\bar l }. $$
and a comparison of coefficients. Part (3) is a consequence of
$$\textstyle
\omega_i J_X = A_i J_X = \sum_j A_i A_j =  \sum_j \sum_l  p_{i,j}^l A_l = 
\sum_l \Bigl(\sum_{j}   p_{i,j}^l\Bigr) A_l.$$
For the proof of the first  statement of  (4), notice that by
part (1), for  $x\in X$
$$\omega_l \cdot  p_{i,j}^l= \sum_k   p_{i,j}^k  (A_k A_{\bar l})_{x,x}=
(A_iA_jA_{\bar l})_{x,x} = \sum_k p_{j,\bar l}^k (A_iA_k)_{x,x}=
 \omega_i\cdot p_{j,\bar l}^{\bar i}= \omega_i\cdot p_{l,\bar j}^i;$$
the second  statement of part (4) follows in a similar way. 
Moreover, the statements in (5) are consequences of
$$\textstyle \omega_i\cdot\omega_j J_X = A_iA_j J_X = \sum_l  p_{i,j}^l A_l J_X=
\sum_l  p_{i,j}^l\cdot\omega_l J_X$$
and
$$\textstyle \omega_{\bar i}\cdot\omega_{\bar j} J_X = J_XA_iA_j  =
 \sum_l  p_{i,j}^lJ_X A_l=
\sum_l  p_{i,j}^l\cdot\omega_{\bar l} J_X.$$
Part (6) follows from $(A_iA_j)A_k=A_i(A_jA_k)$ and comparison of 
coefficients in the expansions. Finally, parts (4) and (2) imply
$$ \frac{\omega_k}{\omega_i\cdot \omega_j}  p_{i,j}^k  = 
 \frac{ p_{k, \bar j}^i}{\omega_j}  =
  \frac{ p_{\bar k, i}^{\bar j}}{\omega_{\bar i}}=
 \frac{ p_{\bar i, k}^{ j}}{\omega_{\bar i}}=
 \frac{\omega_{\bar k}}{\omega_{\bar i}\omega_{\bar j}}   p_{i,j}^k, $$
which proves (7).
\end{proof}

Typical examples of  finite or infinite association schemes are connected
with homogeneous spaces $G/H$ in one of the following two ways: 

\begin{example}\label{example-totally-disconnected}
Let $G$ be a second countable, locally compact
 group with an compact open subgroup $H$ and 
 with neutral element $e$.
 Then the quotient $X:=G/H$ as
well as the double coset space $D:=G//H$ are at most countable, discrete spaces
w.r.t.~the quotient topology. Consider the partition $(R_i)_{i\in D}$ of
$X\times X$ with
$$R_{HgH}:=\{(xH,yH)\in X\times X:\> HgH=Hx^{-1}yH\}.$$
Then $(X,D,(R_{j})_{j\in D})$ forms an association scheme with neutral element
 $HeH=H$ and involution $\overline{HgH}:=Hg^{-1}H$. In fact, the axioms (1),
  (2) in \ref{def-discrete-asso} are obvious. For axiom (3) consider
  $g,h,x,y,\tilde x, \tilde y\in G$ with $(xH,yH)$ and
 $(\tilde xH, \tilde yH)$ in the same partition $R_{Hx^{-1}yH}$, i.e., with
$H\tilde x^{-1}\tilde yH=Hx^{-1}yH$. Thus, there exist $h_1,h_2\in H$ and
  $w\in G$ with $\tilde x^{-1}=h_1 x^{-1}w^{-1}$ and $\tilde  y=wyh_2$.
 Therefore, for any $zH\in X$, $H\tilde x^{-1}zH=Hx^{-1}w^{-1}zH$
  and $Hz^{-1}wyH=H(w^{-1}z)^{-1}H$, which means that $zH\mapsto w^{-1}zH$
  establishes a bijective mapping between 
$\{zH:\> Hx^{-1}zH=HgH, \> Hz^{-1}yH=HhH\}$ and
$\{zH:\> H\tilde x^{-1}zH=HgH, \> Hz^{-1}\tilde yH=HhH\}$. This shows that
  the intersection number $p_{HgH, HhH}^{H x^{-1}yH}$ is independent of
  the choice of $x,y$. Moreover, due to compactness, each double coset $HgH$
  decomposes into finitely many cosets $xH$ which shows that 
$\{zH:\> H x^{-1}zH=HgH\}$ and thus the intersection number $p_{HgH,
    HhH}^{H x^{-1}yH}$  is finite as claimed.

It follows from the definition of $R_{HgH}$ and 
Eq.~(\ref{deutung-omegai}) that for $g\in G$
 the valency $\omega_{HgH}$ of the  double coset $HgH\in G//H$ is given
by the finite number of different cosets $xH\in G/H$
 contained in $HgH$.
\end{example}

The preceding example also works  for  Hecke pairs:

\begin{example}\label{example-Hecke-condition}
Let $(G,H)$ be a  Hecke pair, i.e., 
each double coset $HgH$  decomposes into finitely
many cosets $xH$. Then
the same partition as in Example 
\ref{example-totally-disconnected} leads to an association scheme 
 $(G/H,G//H,(R_{j})_{j\in D})$. We skip the  proof.
 We remark that again the valency
 $\omega_{HgH}$ of a double coset $HgH\in G//H$ is $ind(HgH)$, i.e.,
 the  number of  cosets $xH\in G/H$ contained in $HgH$.
\end{example}

Association schemes lead to discrete
hypergroups. The associated convolution algebras are just the Bose-Mesner
algebras for finite association schemes in \cite{BI}.

\begin{proposition}\label{prp-asso-to-hypergroup}
 Let $\Lambda:=(X, D, (R_i)_{i\in D})$ be an association
  scheme with intersection numbers $p_{i,j}^k$ and valencies $\omega_i$.
Then the product $*$ with
$$\delta_i*\delta_j:=\sum_{k\in D}\frac{\omega_{ k}}{\omega_{ i}\omega_{ j}} 
\cdot p_{i,j}^k \delta_k$$
can be extended uniquely to an associative, bilinear, 
$\|\>.\>\|_{TV}$-continuous mapping on $M_b(D)$.
 $(D,*)$  is a discrete hypergroup with the left and right Haar measure
\begin{equation}\label{const-haar}
\Omega_l:=\sum_{i\in D} \omega_i \delta_i
\quad\quad\text{and}\quad\quad \Omega_r:=\Omega_l^*:=
\sum_{i\in D} \omega_{\bar i} \delta_i   
\end{equation}
 respectively. This  hypergroup is commutative or  
 symmetric  if and only if so is $\Lambda$.
\end{proposition}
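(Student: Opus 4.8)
The plan is to transport the multiplicative structure of the adjacency matrices to $M_b(D)$ and then to verify the hypergroup axioms one at a time against Lemma \ref{multass}. First I would settle the extension and the continuity. Since $D$ is at most countable and discrete, $M_b(D)$ is just $\ell^1(D)$, and every $\mu,\nu\in M_b(D)$ may be written as $\mu=\sum_i\mu_i\delta_i$, $\nu=\sum_j\nu_j\delta_j$ with absolutely summable coefficients. By (\ref{sum-one}) each $\delta_i*\delta_j$ has nonnegative coefficients summing to $1$, hence is a probability measure with $\|\delta_i*\delta_j\|_{TV}=1$. Putting $\mu*\nu:=\sum_{i,j}\mu_i\nu_j\,(\delta_i*\delta_j)$ therefore gives an absolutely convergent sum with $\|\mu*\nu\|_{TV}\le\|\mu\|_{TV}\|\nu\|_{TV}$, and this is the unique bilinear, $\|.\|_{TV}$-continuous extension, since finitely supported measures are dense. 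This simultaneously verifies the probability-measure part of axiom (1); the finiteness of $\mathrm{supp}(\delta_i*\delta_j)$ follows because each row of $A_iA_j$ has at most $\omega_i\omega_j$ nonzero entries by (\ref{deutung-omegai}) and \ref{def-adjacency-matrix}(4), so only finitely many $k$ have $p_{i,j}^k\ne0$, while continuous dependence of the support on $i,j$ is automatic in the discrete setting.

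For associativity I would avoid direct manipulation of the $p_{i,j}^k$ and instead use the renormalized stochastic matrices $S_i=\tfrac1{\omega_i}A_i$. Because the $R_i$ partition $X\times X$, the $A_i$ and hence the $S_i$ have pairwise disjoint supports and are linearly independent, and by \ref{def-adjacency-matrix}(4) each has finitely many nonzero entries in every row and column, so arbitrary products are well defined and associative. Relation (\ref{rel-stoch-matrices}) states precisely that the linear map $\Phi\colon M_b(D)\to\mathbb R^{X\times X}$ with $\Phi(\delta_i)=S_i$ satisfies $\Phi(\delta_i*\delta_j)=S_iS_j=\Phi(\delta_i)\Phi(\delta_j)$. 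By bilinearity $\Phi$ is multiplicative on finitely supported measures, and since $\Phi$ is injective there, the identity $(S_iS_j)S_k=S_i(S_jS_k)$ forces $(\delta_i*\delta_j)*\delta_k=\delta_i*(\delta_j*\delta_k)$; associativity on all of $M_b(D)$ then follows by continuity.

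It remains to check axioms (2)--(4) and to compute the Haar measures. Axiom (2) is immediate from $p_{e,i}^k=p_{i,e}^k=\delta_{i,k}$ and $\omega_e=1$ in \ref{multass}(1). For axiom (3) one has $e\in\mathrm{supp}(\delta_i*\delta_j)$ iff $p_{i,j}^e\ne0$, and $p_{i,j}^e=\omega_i\delta_{i,\bar j}$ by \ref{multass}(1), so this holds exactly when $j=\bar i$. The step I expect to be the main obstacle is axiom (4): expanding $(\delta_i*\delta_j)^-$ and $\delta_{\bar j}*\delta_{\bar i}$ and reindexing $k=\bar l$, the required coefficient identity reduces to $p_{i,j}^{\bar l}=p_{\bar j,\bar i}^l$ together with $\omega_{\bar l}/(\omega_i\omega_j)=\omega_l/(\omega_{\bar i}\omega_{\bar j})$ whenever $p_{i,j}^{\bar l}\ne0$. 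The first is \ref{multass}(2), and the second is exactly the valency identity \ref{multass}(7), which is the genuinely non-formal input here rather than a routine reindexing.

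Finally, since $D$ is discrete I would invoke Examples \ref{example-haar}(1): from $(\delta_{\bar i}*\delta_i)(\{e\})=p_{\bar i,i}^e/(\omega_i\omega_{\bar i})=1/\omega_i$ and $(\delta_i*\delta_{\bar i})(\{e\})=1/\omega_{\bar i}$, again using \ref{multass}(1), one reads off $\Omega_l=\sum_i\omega_i\delta_i$ and $\Omega_r=\sum_i\omega_{\bar i}\delta_i$, and $\Omega_r=\Omega_l^*$ since $\Omega_l^*(\{i\})=\overline{\Omega_l(\{\bar i\})}=\omega_{\bar i}$, matching (\ref{const-haar}). The equivalences in the last sentence are pure coefficient comparisons: $\delta_i*\delta_j=\delta_j*\delta_i$ for all $i,j$ is equivalent to $p_{i,j}^k=p_{j,i}^k$, i.e.\ commutativity of $\Lambda$, while symmetry of $(D,*)$ means $\bar i=i$ for all $i$, i.e.\ symmetry of $\Lambda$.
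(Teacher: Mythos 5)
Your proof is correct and follows essentially the same route as the paper: probability measures with finite support via Lemma \ref{multass}(5)/(\ref{sum-one}), associativity traced back to associativity of the adjacency/stochastic matrices (the paper phrases this as the coefficient identity \ref{multass}(6), which it derives from exactly the matrix identity you use), the remaining axioms from Lemma \ref{multass}, and the Haar measures from Example \ref{example-haar}(1). You correctly single out \ref{multass}(7) as the substantive input for axiom (4), a step the paper leaves implicit.
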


\begin{proof} By Lemma \ref{multass}(5), $\delta_i*\delta_j$ is a probability
  measure with finite support for  $i,j\in D$. Thus, $*$ can be
  extended uniquely to a bilinear continuous mapping on $M_b(D)$. 
The associativity of $*$  follows from  Lemma \ref{multass}(6) for
point measures, and, in the general case, by the unique  bilinear continuous
extension. The remaining hypergroup axioms now follow from
  Lemma \ref{multass}. Clearly, $*$ is commutative if and only if so is 
 $\Lambda$. The same holds for symmetry, as the involution on a
  hypergroup and on an association scheme are unique. 

Finally, by  Example
\ref{example-haar}, a left Haar measure is given by
 $$\Omega_l(\{i\}) = \frac{1}{\tilde p_{\tilde i, i}^e}= 
\frac{\omega_{\tilde i}\omega_{ i}}{\omega_e  p_{\tilde i, i}^e}=\omega_i
\quad(i\in D).$$
The same  argument works for  right Haar measures.
\end{proof}

\begin{example}\label{equal-const}
 Let $G$ be a second countable, locally compact group with
  a compact open subgroup $H$. Consider the associated quotient 
association scheme 
 $(X=G/H,D=G//H,(R_{j})_{j\in D})$ as in Example
\ref{example-totally-disconnected}. Then the 
associated hypergroup $(D=G//H,*)$ 
according to Proposition \ref{prp-asso-to-hypergroup} is  the double
coset hypergroup in the sense of Section 2.

For the proof  notice that  both hypergroups  live on the discrete space $D$.
We must show
that for all $x,y,g\in G$ the products $(\delta_{HxH}*\delta_{HyH})(\{HgH\})$
are equal.

Let us compute this first for the double coset hypergroup $G//H$:
By Example \ref{example-totally-disconnected}, $HxH$ decomposes
into $\omega_{HxH}$ many disjoint cosets $x_1H,\ldots, x_{\omega_{HxH}}H$.
 This shows
that $HxH$ also decomposes into  $\omega_{Hx^{-1}H}$  many disjoint cosets
of the form  $H\tilde x_1^{-1},\ldots, H \tilde x_{\omega_{Hx^{-1}H}}^{-1}$. Moreover,
 $HyH$ decomposes
into $\omega_{HyH}$ many disjoint cosets $y_1H,\ldots, y_{\omega_{HyH}}H$.
Using the normalized Haar measure $\omega_H$ of $H$ we obtain
\begin{align}((\omega_H*\delta_x*\omega_H)&*(\omega_H*\delta_y*\omega_H))(HgH)=
\\&=
\frac{1}{\omega_{Hx^{-1}H}\cdot \omega_{HyH}}
\sum_{k=1}^{\omega_{Hx^{-1}H}}\sum_{l=1}^{\omega_{HyH}}
(\omega_H*\delta_{\tilde x_k^{-1}}*\delta_{y_l}*\omega_H)(HgH).\notag
\end{align}
Therefore, by the definition of the double coset convolution in Section 2,
\begin{equation}\label{doppelnebenkl-deutung}
(\delta_{HxH}*\delta_{HyH})(\{HgH\})=
\frac{|\{(k,l): H\tilde x_k^{-1}y_lH=HgH\}|}{\omega_{Hx^{-1}H} \cdot\omega_{HyH}}.
\end{equation}

We next check that the hypergroup associated with the association scheme
in Example \ref{example-totally-disconnected} leads to the same result.
For this we employ  the beginning of the proof of \ref{multass}(4) and observe
that
$$\omega_{HgH}\cdot p_{HxH, HyH}^{HgH}=(A_{HxH}A_{HyH}A_{Hg^{-1}H})_{eH,eH}$$ for the
trivial coset $eH\in X=G/H$. This shows  with the notations above that
$$\omega_{HgH}\cdot p_{HxH, HyH}^{HgH}=|\{(k,l): H\tilde x_k^{-1}y_lH=HgH\}|.$$
The convolution in Proposition 
\ref{prp-asso-to-hypergroup} now again leads to (\ref{doppelnebenkl-deutung}) as claimed.
\end{example}

The same result can be obtained for  Hecke pairs.
We  omit the obvious proof.

\begin{example}
Let $(G,H)$ be a Hecke pair. Consider the associated quotient 
association scheme 
 $(X=G/H,D=G//H,(R_{j})_{j\in D}$ as  in Example
\ref{example-Hecke-condition}. Then the 
associated hypergroup $(D=G//H,*)$ 
according to Proposition \ref{prp-asso-to-hypergroup} is the double
coset hypergroup in the sense of Section 2.
\end{example}

We next discuss a property of association schemes which is valid in the
finite case,  but not necessarily in infinite cases.

\begin{definition} An association scheme $\Lambda$ with
 valencies $\omega_i$ is called unimodular if $\omega_i=\omega_{\bar i}$
 for all $i\in D$. 
\end{definition}

\begin{lemma}\label{unimod}
\begin{enumerate}
\item[\rm{(1)}] If an association scheme is commutative or finite,
 then it is unimodular.
\item[\rm{(2)}] An association scheme $(X,D,(R_i)_{i\in D})$ is unimodular if
  and only if
 the associated discrete hypergroup $(D,*)$ is unimodular.
\item[\rm{(3)}] If  $(X,D,(R_i)_{i\in D})$ is unimodular, then
$S_i^T=S_{\bar i}\quad (i\in D).$
\end{enumerate}
\end{lemma}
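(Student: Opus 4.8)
The plan is to treat the three parts separately, each reducing to a short computation once the appropriate earlier result is invoked.

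For part (1) I would handle the two hypotheses by genuinely different arguments. In the commutative case I use the definition of the valency $\omega_i=p_{i,\bar i}^e$ from Definition \ref{def-valency}; since $\bar{\bar i}=i$, the same definition gives $\omega_{\bar i}=p_{\bar i, i}^e$. Commutativity means $p_{i,j}^k=p_{j,i}^k$ for all $i,j,k$, and applying this with $j=\bar i$, $k=e$ yields $p_{i,\bar i}^e=p_{\bar i, i}^e$, i.e.\ $\omega_i=\omega_{\bar i}$. In the finite case I instead count the nonzero entries of the adjacency matrix $A_i$ in two ways. By (\ref{deutung-omegai}) each of the $|X|$ rows of $A_i$ has row sum $\omega_i$, so the total number of ones is $|X|\omega_i$. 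On the other hand the sum over the column indexed by $y$ equals $|\{x:(x,y)\in R_i\}|=|\{x:(y,x)\in R_{\bar i}\}|=\omega_{\bar i}$, again by (\ref{deutung-omegai}) applied to $\bar i$; summing over the $|X|$ columns gives the same total $|X|\omega_{\bar i}$. As $|X|$ is finite and positive, $\omega_i=\omega_{\bar i}$.

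For part (2) I would invoke the explicit left and right Haar measures $\Omega_l=\sum_{i\in D}\omega_i\delta_i$ and $\Omega_r=\sum_{i\in D}\omega_{\bar i}\delta_i$ supplied by Proposition \ref{prp-asso-to-hypergroup}, together with the uniqueness of left and right Haar measures up to normalization. By definition the hypergroup is unimodular exactly when some single measure is simultaneously a left and a right Haar measure, and by uniqueness this occurs if and only if $\Omega_l$ and $\Omega_r$ are proportional, say $\Omega_l=c\,\Omega_r$. Evaluating both sides at the singleton $\{e\}$ and using $\omega_e=\omega_{\bar e}=1$ (as $\bar e=e$) forces $c=1$, so unimodularity of the hypergroup is equivalent to $\Omega_l=\Omega_r$, i.e.\ to $\omega_i=\omega_{\bar i}$ for all $i\in D$, which is precisely unimodularity of $\Lambda$. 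Part (3) is then a one-line computation from the definitions: with $S_i=\omega_i^{-1}A_i$ and $A_i^T=A_{\bar i}$ from \ref{def-adjacency-matrix}(3) one gets $S_i^T=\omega_i^{-1}A_{\bar i}$, while $S_{\bar i}=\omega_{\bar i}^{-1}A_{\bar i}$; unimodularity $\omega_i=\omega_{\bar i}$ gives $S_i^T=S_{\bar i}$.

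I do not anticipate a serious obstacle. The only points requiring care are recognizing that the two hypotheses in part (1) call for different routes—the algebraic identity $p_{i,\bar i}^e=p_{\bar i,i}^e$ is available only under commutativity, whereas in the finite non-commutative case the double counting of matrix entries is the natural argument—and, in part (2), actually pinning down the normalization constant through the value at $e$ rather than merely asserting proportionality of $\Omega_l$ and $\Omega_r$.
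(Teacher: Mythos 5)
Your proof is correct and follows essentially the same route as the paper: the commutative case via $\omega_i=p_{i,\bar i}^e=p_{\bar i,i}^e=\omega_{\bar i}$, the finite case by double-counting the ones in the adjacency matrix using $A_i^T=A_{\bar i}$, part (2) from the explicit formulas for $\Omega_l$ and $\Omega_r$, and part (3) by a one-line computation. The paper merely compresses these steps (calling the commutative case ``trivial'' and parts (2) and (3) ``clear''), while you supply the details, including the normalization check at $e$ in part (2).
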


\begin{proof}
The commutative case in (1) is trivial. Now let the set $X$ of an association
scheme finite. Then, for $h\in D$, the adjacency matrices $A_h$ and $A_{\bar
  h}$ have $|X|\cdot\omega_h$ and  $|X|\cdot\omega_{\bar h}$ times the entry 1
respectively. $A_{\bar h}=A_h^T$ now implies $\omega_h=\omega_{\bar h}$.

 Part (2) is clear by Eq.~(\ref{const-haar})
 in Proposition \ref{prp-asso-to-hypergroup}; (3) is also clear.
\end{proof}

\begin{remark}
\begin{enumerate}
\item[\rm{(1)}] There exist  non-unimodular association schemes  in
the group context \ref{example-totally-disconnected}. In fact, in
\cite{KW}, examples of totally disconnected groups $G$ with compact open
subgroups $H$ are given where the associated double coset hypergroups
$(G//H,*)$ are not unimodular. Therefore, by Lemma \ref{unimod}(2),
the corresponding association schemes are not unimodular. For examples 
in the context of Hecke pairs see \cite{Kr1}.
\item[\rm{(2)}] Lemma \ref{multass}(7) has the interpretation that the modular
  function $\Delta(i):=\frac{\omega_i}{\omega_{\bar i}}$ ($i\in D$) 
is multiplicative in a strong form.
\end{enumerate}\end{remark}

\begin{remark}\label{isomorphe-algebren}
Let $\Lambda$ be an unimodular 
association scheme with the associated hypergroup $(D,*)$
as in Proposition \ref{prp-asso-to-hypergroup}. Then,
 by (\ref{rel-stoch-matrices}) and \ref{unimod}(3), 
the (finite) linear span of the matrices
 $S_i$ ($i\in D$) forms an $*$-algebra which is via
$S_i\longleftrightarrow \delta_i$ ($i\in D$)
isomorphic with the $*$-algebra $(M_f(D),*)$ 
of all signed measures on $D$ with finite support and the convolution from 
Proposition \ref{prp-asso-to-hypergroup}.
\end{remark}

We briefly study  concepts of automorphisms of association schemes.

\begin{definition}
 \begin{enumerate}
\item[\rm{(1)}] Let $\Lambda=(X,D=G,(R_{j})_{j\in D})$ and 
$\tilde\Lambda=(\tilde X,
\tilde D,(\tilde R_{j})_{j\in\tilde  D})$ be association schemes.
A pair of $(\phi,\psi)$ of bijective mappings $\phi:X\to\tilde X$,
$\psi:D\to\tilde D$ is called an isomorphism from $\Lambda$
 onto  $\tilde\Lambda$ if for all $x,y\in X$ and $i\in D$ with $(x,y)\in R_i$,
$(\phi(x),\phi(y))\in\tilde R_{\psi(i)}$.
\item[\rm{(2)}] If $\Lambda=\tilde\Lambda$, then an isomorphism is called an
  automorphism.
\item[\rm{(3)}] If $(\phi,\psi)$ is an automorphism with $\psi$ as identity,
  then $\phi$ is called a strong automorphism. The set $Saut(\Lambda)$
of all strong  automorphisms of $\Lambda$ is obviously a group.
\end{enumerate}\end{definition}

\begin{remark}\label{rem-auto1}
 Let $\Lambda=(X,D,(R_{j})_{j\in D})$  be an association
  scheme. If we regard  $Saut(\Lambda)$ as  subspace of the space
 $X^X$ of all maps from $X$ to $X$ with the product topology, then
  $Saut(\Lambda)$ obviously is a topological group. For  $x_0\in X$ we
  consider the stabilizer subgroup 
$Stab_{x_0}:=\{\phi\in Saut(\Lambda):\> \phi(x_0)=x_0\}$ which is obviously
  open in $Saut(\Lambda)$. $Stab_{x_0}$ is also compact. In fact,
for each $g\in Stab_{x_0}$ and $y\in X$ with $(x,y)\in R_i$ with $i\in D$,
we have $(x,g(y))=(g(x),g(y))\in  R_i$. The axioms of an association
scheme show that  $\{g(y):\> g\in  Stab_{x_0}\}$ is finite  $y\in X$.
 This shows  that  $Stab_{x_0}$ is compact. In particular,
 $Stab_{x_0}$ is  a compact open neighborhood of the identity of 
 $Saut(\Lambda)$ which shows that $Saut(\Lambda)$ is locally compact.

This argument is completely analog to  Example \ref{graphauto} for graphs.
\end{remark}

\begin{example}\label{rem-auto}
 Let $G$ be a locally compact group with compact open subgroup
  $H$, or let $(G,H)$ be a Hecke pair. Consider the associated
 association  scheme
 $\Lambda=(X=G/H,D=G//H,(R_{j})_{j\in D})$  as in
 \ref{example-totally-disconnected} or \ref{example-Hecke-condition}.
 Then for each $g\in G$, the mapping $T_g:G/H\to G/H$, $xH\mapsto gxH$ defines a strong
 automorphism of $\Lambda$.
This obviously leads to a group homomorphism $T:G\to Saut(\Lambda)$.
\end{example}

\begin{remark}
Let $(\phi,\psi)$ be an automorphism of an association
  scheme $\Lambda=(X,D,(R_{j})_{j\in D})$. Then $\psi:D\to D$ is an
  automorphism of the associated hypergroup on $D$ according to
Proposition \ref{prp-asso-to-hypergroup}. This follows immediately 
from the definition of
the convolution in \ref{prp-asso-to-hypergroup} and the definitions of
$p_{ij}^k$ and $\omega_i$.\end{remark}

The constructions of
association schemes  are   parallel above for groups $G$
with
compact open subgroups $H$ and for Hecke pairs $(G,H)$.
This is not an accident:

If $\tilde G$ is a locally compact group
with  compact open subgroup
$\tilde H$, then w.r.t.~the discrete topology, $(\tilde G,\tilde H)$ is also a
Hecke pair, and the association scheme $(\tilde X=\tilde G/\tilde H,
\tilde D=\tilde G//\tilde H,(\tilde R_{j})_{j\in\tilde  D})$ does not depend
on the topologies. In this way, the approach via Hecke pairs seems to be 
the more general one. On the other hand, the following theorem shows that
both approaches are equivalent:

\begin{theorem}
 Let $(G,H)$ be a Hecke pair. Then there is a totally
  disconnected locally compact group $\bar G$  with an compact open subgroup
$\bar H$ such that the associated association schemes
 $(G/H,D=G//H,(R_{j})_{j\in D})$ and $(\bar G/\bar H,
\bar D=\bar G//\bar H,(\bar R_{j})_{j\in\bar  D})$ are isomorphic.
The associated  double coset hypergroups are also isomorphic.
\end{theorem}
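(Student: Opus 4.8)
The plan is to realize $\bar G$ as a closed subgroup of the strong automorphism group $Saut(\Lambda)$ of the association scheme $\Lambda=(X=G/H,\,D=G//H,\,(R_j)_{j\in D})$ attached to the Hecke pair in Example \ref{example-Hecke-condition}. By Remark \ref{rem-auto1}, $Saut(\Lambda)$ is a totally disconnected locally compact group in which the point stabilizer $Stab_{x_0}$ of the base coset $x_0:=eH\in X$ is compact and open. By Example \ref{rem-auto}, left translation gives a group homomorphism $T:G\to Saut(\Lambda)$, $T_g(xH)=gxH$. I would then set
\[
\bar G:=\overline{T(G)}\subseteq Saut(\Lambda),\qquad \bar H:=\bar G\cap Stab_{x_0},
\]
the closure being taken in the topology of pointwise convergence.

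First I would record the topological properties. As a closed subgroup of the totally disconnected locally compact group $Saut(\Lambda)$, the group $\bar G$ is itself totally disconnected and locally compact. Since $Stab_{x_0}$ is open in $Saut(\Lambda)$, its trace $\bar H$ is open in $\bar G$; and since $Stab_{x_0}$ is compact while $\bar H$ is closed in it, $\bar H$ is compact. Thus $(\bar G,\bar H)$ is a totally disconnected locally compact group with a compact open subgroup, so the construction of Example \ref{example-totally-disconnected} applies to it. Because $T(G)$ already acts transitively on $X=G/H$, so does $\bar G$, and by construction the stabilizer of $x_0$ in $\bar G$ is exactly $\bar H$; hence $\bar g\bar H\mapsto \bar g(x_0)$ is a well-defined $\bar G$-equivariant bijection $\phi:\bar G/\bar H\to X$.

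The heart of the argument is to match the relations via orbits. On one hand, every element of $\bar G\subseteq Saut(\Lambda)$ is a strong automorphism and therefore preserves each $R_i$, so each $\bar G$-orbit on $X\times X$ lies in a single $R_i$. On the other hand, I would check that each $R_i$ is already a single $T(G)$-orbit: given $(x'H,y'H)\in R_{HgH}$, i.e.\ $Hx'^{-1}y'H=HgH$, one has $x'^{-1}y'=h_1gh_2$ with $h_1,h_2\in H$, and then $\gamma:=x'h_1$ satisfies $T_\gamma(x_0)=x'H$ and $T_\gamma(gH)=x'h_1gH=y'H$, so $T_\gamma$ carries the representative pair $(eH,gH)\in R_{HgH}$ to $(x'H,y'H)$. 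Combining both observations with $T(G)\subseteq\bar G$, the $\bar G$-orbits on $X\times X$ are exactly the $R_i$. Applying Example \ref{example-totally-disconnected} to $(\bar G,\bar H)$, the relations $\bar R_j$ of the scheme $(\bar G/\bar H,\,\bar G//\bar H,\,(\bar R_j))$ are precisely the $\bar G$-orbits on $(\bar G/\bar H)^2$. Transporting these through the equivariant bijection $\phi$ yields a bijection $\psi:\bar D\to D$ with $(\phi\times\phi)(\bar R_j)=R_{\psi(j)}$, so that $(\phi,\psi)$ is an isomorphism of association schemes.

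Finally, isomorphic association schemes share the same intersection numbers and valencies, so by Proposition \ref{prp-asso-to-hypergroup} the map $\psi$ intertwines the two scheme convolutions; and by Example \ref{equal-const} together with its Hecke-pair analogue following it, these scheme hypergroups coincide with the double coset hypergroups $\bar G//\bar H$ and $G//H$. Hence $\psi$ is the desired hypergroup isomorphism. I expect the only genuinely delicate point to be the compactness of $\bar H$, which is where the Hecke condition is really used; but this has already been delivered by Remark \ref{rem-auto1}, so that the remaining work is the clean orbit identification, the main care lying in the verification that each $R_i$ is a single $T(G)$-orbit.
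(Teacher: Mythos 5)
Your argument is correct and follows essentially the same route as the paper: $\bar G$ is realized as the closure of $T(G)$ inside the totally disconnected locally compact group $Saut(\Lambda)$, with $\bar H$ the trace of the compact open point stabilizer $Stab_{eH}$. The only deviations are cosmetic --- you avoid the paper's preliminary reduction modulo the normal core $N=\bigcap_{x\in G}xHx^{-1}$ by working with $T(G)$ directly rather than with an injective copy of $G$, and you verify the scheme isomorphism by identifying the relations on both sides with the diagonal $\bar G$-orbits, whereas the paper checks the explicit coset maps $gH\mapsto g\bar H$ and $HgH\mapsto\bar Hg\bar H$; both verifications amount to the same computation.
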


\begin{proof}
Let
$(G,H)$ be a Hecke pair and $\Lambda:=(G/H,D=G//H,(R_{j})_{j\in D})$
 the associated association scheme. Consider the homomorphism
$T:G\to Saut(\Lambda)$
according to  
\ref{rem-auto}, where  $G$ acts transitively on $X=G/H$.
Hence, the totally disconnected locally compact group 
$\tilde G:=Saut(\Lambda)$ of \ref{rem-auto1} acts transitively on $X=G/H$.
We define $\tilde H:=Stab_{eH}$ as a compact open subgroup of $\tilde G$.

We next consider the normal subgroup $N:=\bigcap_{x\in G} xHx^{-1}\le H$ of $G$.
Then obviously $(G/N)/(H/N)\simeq G/H$, $(G/N)//(H/N)\simeq G//H$,
and $(G/N, H/N)$ forms a Hecke pair for which the associated 
 associated association scheme is isomorphic with $\Lambda$. Using
 this division
 by $N$, we may assume from now on 
that $N=\{e\}$.
 
If this is the case, we obtain that the homomorphism $G\to \tilde G,
g\mapsto T_g$ from \ref{rem-auto} with  $T_g(xH)=gxH$ is injective.
In fact, if for some $g\in G$ and all $x\in G$ we have 
$T_g(xH)=gxH=xH$, it follows
that $g\in N=\{e\}$ as claimed.
We thus may assume that $G$ is a subgroup of $\tilde G$.
We then readily obtain $H=G\cap \tilde H$. 

We now consider the closures $\bar G,\bar H$ of $G,H$ in $\tilde G$.
Then $\bar G$ is a locally compact, totally disconnected topological group
with $\bar H\subset \tilde H$ as compact subgroup. Moreover
$H=G\cap \tilde H$ yields  $\bar H=\bar G\cap \tilde H$ which implies
that  $\bar H$ is in fact a compact open subgroup of $\bar G$.
Moreover, as $\bar G$ acts transitively on $G/H$ with $\bar H$ as stabilizer
subgroup of $eH\in G/H$, we may identify $\bar G/\bar H$ with $G/H$.
To make this more explicit we claim that
\begin{equation}\label{gleich-kl}
\text{for all}\quad \bar g\in\bar G
\quad\text{there exists}\quad g\in G \quad\text{with}
\quad\bar g \bar H= g \bar H.
\end{equation}
In fact, each $\bar g\in\bar G$ is the limit of some $g_\alpha\in G$. 
As $\bar g\bar H$ is an open neighborhood of  $\bar g\in\bar G$, we
obtain   $g_\alpha\in\bar g\bar H$ for large indices which proves 
(\ref{gleich-kl}). Moreover, as for $g_1,g_2\in G$ the relation
$g_1\bar H=g_2\bar H$ implies $g_1^{-1}g_2\in \bar H\cap G=H$, we obtain with
(\ref{gleich-kl}) that
the mapping
$$\phi:G/K\to \bar G/\bar H, \quad gH\mapsto g\bar H$$
is a well defined bijective mapping.
 Moreover,  it is clear that the orbits of the action of $H$ and $\bar H$
on  $G/H$ are equal.
 This shows that also the mapping
$$\psi:G//K\to \bar G//\bar H, \quad HgH\mapsto\bar H g\bar H$$
is  well defined and bijective. If we compare the pair $(\phi,\psi)$
with the construction of the associations schemes in Examples 
\ref{example-totally-disconnected} and 
\ref{example-Hecke-condition}, we see that the  schemes
$\Lambda$ and $(\bar X=\bar G/\bar H,
\bar D=\bar G//\bar H,(\bar R_{j})_{j\in\bar  D})$ are 
isomorphic via $(\phi,\psi)$ as claimed.

Finally, the statement about the associated double coset hypergroups is
clear by 
Example \ref{equal-const}.
\end{proof}

We finally present the following trivial commutativity criterion:

\begin{lemma}\label{crit-comm} If an association scheme  $\Lambda=(X,D,(R_i)_{i\in D})$ admits an 
automorphism $(\phi,\psi)$ with $\psi(i)=\bar i$ for $i\in D$, then 
 $\Lambda$ is commutative.
\end{lemma}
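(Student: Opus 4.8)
The plan is to verify the commutativity condition $p_{i,j}^k=p_{j,i}^k$ for all $i,j,k\in D$ directly. The strategy has two ingredients: first, extract from the automorphism $(\phi,\psi)$ a twisted invariance of the intersection numbers, namely $p_{i,j}^k=p_{\psi(i),\psi(j)}^{\psi(k)}$; and second, combine this with the symmetry identity $p_{i,j}^l=p_{\bar j,\bar i}^{\bar l}$ from Lemma \ref{multass}(2) together with the hypothesis $\psi(i)=\bar i$.

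First I would record that the automorphism condition is in fact bidirectional. Since $\phi$ and $\psi$ are bijections and the family $(R_i)_{i\in D}$ is a disjoint partition of $X\times X$, the implication $(x,y)\in R_i\Rightarrow(\phi(x),\phi(y))\in R_{\psi(i)}$ upgrades to an equivalence $(x,y)\in R_i\Leftrightarrow(\phi(x),\phi(y))\in R_{\psi(i)}$ (any pair $(\phi(x),\phi(y))$ lies in a unique block, and disjointness pins down its label via $\psi$).

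The key step is the twisted invariance. Fix $(x,y)\in R_k$, so that $(\phi(x),\phi(y))\in R_{\psi(k)}$. By definition $p_{i,j}^k$ counts the $z\in X$ with $(x,z)\in R_i$ and $(z,y)\in R_j$. Applying the bidirectional condition to each of these two relations shows that $z$ lies in this counting set if and only if $\phi(z)$ satisfies $(\phi(x),\phi(z))\in R_{\psi(i)}$ and $(\phi(z),\phi(y))\in R_{\psi(j)}$, i.e.\ if and only if $\phi(z)$ lies in the counting set for $p_{\psi(i),\psi(j)}^{\psi(k)}$ based at $(\phi(x),\phi(y))$. As $\phi$ is a bijection of $X$, it restricts to a bijection between the two sets, and hence $p_{i,j}^k=p_{\psi(i),\psi(j)}^{\psi(k)}$.

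Finally I would insert the hypothesis $\psi(i)=\bar i$, which turns the twisted invariance into $p_{i,j}^k=p_{\bar i,\bar j}^{\bar k}$. Applying Lemma \ref{multass}(2) with $i\mapsto\bar i$, $j\mapsto\bar j$, $l\mapsto\bar k$, and using $\overline{\bar i}=i$, gives $p_{\bar i,\bar j}^{\bar k}=p_{j,i}^k$. Chaining the two equalities yields $p_{i,j}^k=p_{j,i}^k$, which is exactly commutativity. The only point that needs genuine care is the twisted invariance: one must confirm that $\phi$ really restricts to a bijection of the two counting sets, and this is precisely where the bidirectional form of the automorphism condition is used; the remaining manipulation is routine bookkeeping with the involution, so I do not anticipate a serious obstacle.
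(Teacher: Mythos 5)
Your proposal is correct and follows essentially the same route as the paper: both proofs combine the invariance $p_{i,j}^k=p_{\psi(i),\psi(j)}^{\psi(k)}$ under the automorphism with the transposition identity $p_{i,j}^l=p_{\bar j,\bar i}^{\bar l}$ of Lemma \ref{multass}(2), merely applying the two ingredients in the opposite order. Your explicit verification that $\phi$ restricts to a bijection of the counting sets (via the upgraded, bidirectional form of the automorphism condition) is exactly the detail the paper leaves to the reader with the phrase ``follows easily from the definition of an automorphism.''
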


\begin{proof}  For $i,j,k\in D$, $p_{i,j}^k = p_{\bar j, \bar i}^{\bar k}
= p_{\psi(j),\psi(i)}^{\psi(k)} = p_{j,i}^k$ where the last equality follows
easily from the definition of an automorphism and the definition of $p_{j,i}^k$.
\end{proof}

\begin{remark}
 Lemma \ref{crit-comm}  is a natural extension of
  the following well-known criterion for Gelfand pairs:\hfil\break
Let $H$ be a compact subgroup of a locally compact group $G$ such that there
is a continuous  automorphism $T$ of $G$ with $x^{-1}\in HT(x)H$ for $x\in G$.
 Then $(G,H)$ is a Gelfand pair, i.e., $G//H$ is a commutative hypergroup.
\hfil\break
If $H$ is compact and open in $G$, then we obtain this criterion from
 Lemma \ref{crit-comm} applied to 
association scheme  $\Lambda=(X=G/H, D=G//H, (R_i)_{i\in D})$
with the automorphism  $(\phi, \psi)$ with 
$\phi(gH):= T(g)H$ and $\psi(HgH):= HT(g)H$. 
\end{remark}

\section{Commutative association schemes and dual product formulas}

In this section let $\Lambda=(X,D,(R_i)_{i\in D})$ be a commutative
association scheme and $(D,*)$ the associated commutative discrete 
hypergroup as in Proposition \ref{prp-asso-to-hypergroup}.

By Remark \ref{isomorphe-algebren}, the  linear span $A(X)$ 
of the matrices
 $S_i$ ($i\in D$) is a commutative $*$-algebra  which is 
isomorphic with the   $*$-algebra $(M_f(D),*)$ via
$$\mu\in M_f(D) 
\longleftrightarrow S_\mu:=\sum_{i\in D} \mu(\{i\}) S_i\in A(X).
$$
Moreover, as $(f*g)\Omega=f\Omega *g\Omega$ and $f^*\Omega=(f\Omega)^*$
for all $f,g\in C_c(D)$ and the Haar measure $\Omega$ of $(D,*)$ by
\cite{J}, the 
$*$-algebra $A(X)$ is also isomorphic with the commutative  $*$-algebra
$(C_c(D),*)$ via
$$ f\in C_c(D)  \longleftrightarrow S_f:=\sum_{i\in D} f(i)\omega_i S_i=
\sum_{i\in D} f(i) A_i \in A(X).$$
By this construction, $S_f=S_{f\Omega}$ for $ f\in C_c(D)$. 
We notice that even for $f\in C_b(D)$, the matrix
$ S_f:=\sum_{i\in D} f(i)\omega_i S_i$ is well-defined, and that products of 
$ S_f$ with matrices with only finitely many nonzero entries in all rows and
columns are well-defined. In particular, we may multiply $ S_f$ on both sides
with any $S_\mu$, $\mu\in M_f(D)$.

For each function $f\in C_b(D)$ we define the function 
$F_f:X\times X\to\mathbb C$ with $F_f(x,y)=f(i)$ for the unique $i\in D$ with
$(x,y)\in R_i$. Then $F_f$ is constant on all partitions $R_i$, and by our
construction, 
 $$(F_f(x,y))_{x,y\in X} = S_f.$$
In particular,  $f(e)$ is equal to the diagonal entries of $S_f$ (where these are equal).

The following definition is standard for kernels.

\begin{definition}
A function $F:X\times X\to \mathbb C$ is 
called positive definite on $X\times X$
 if for all $n\in\mathbb N$, $x_1,\ldots, x_n\in X$ and
  $c_1,\ldots,c_n\in\mathbb C$,
$$\sum_{k,l=1}^n c_k \bar c_l \cdot F(x_k,x_l)\ge0,$$
i.e., the matrices $( F(x_k,x_l))_{k,l}$ are positive semidefinite.
This in particular implies that these matrices are hermitian, i.e., 
$$ F(x,y)=\overline{F(y,x)} \quad\text{for all}\quad x,y\in X.$$
\end{definition}

As the pointwise products of positive semidefinite matrices are again positive 
 semidefinite (see e.g.~Lemma 3.2 of \cite{BF}), we have:

\begin{lemma}\label{prod-pos}
If  $F,G:X\times X\to\mathbb C$ are positive definite, then the pointwise product
$F\cdot G:X\times X\to\mathbb C$ is also positive definite.
\end{lemma}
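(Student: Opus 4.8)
The plan is to reduce the statement about positive-definite kernels to the classical Schur (Hadamard) product theorem for positive semidefinite matrices. By the very definition of positive definiteness on $X\times X$, it suffices to fix an arbitrary $n\in\mathbb N$, points $x_1,\ldots,x_n\in X$, and scalars $c_1,\ldots,c_n\in\mathbb C$, and to verify that $\sum_{k,l=1}^n c_k\bar c_l\,F(x_k,x_l)G(x_k,x_l)\ge0$. Writing $A:=(F(x_k,x_l))_{k,l}$ and $B:=(G(x_k,x_l))_{k,l}$, the hypotheses say precisely that the $n\times n$ matrices $A$ and $B$ are positive semidefinite, and the quantity to be estimated is the quadratic form of their entrywise (Hadamard) product $A\circ B:=(A_{kl}B_{kl})_{k,l}$ evaluated at the vector $c=(c_1,\ldots,c_n)$.

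First I would express one factor, say $A$, through its spectral decomposition: since $A$ is positive semidefinite and hermitian, there are nonnegative eigenvalues $\lambda_r\ge0$ and an orthonormal basis $(u_r)_r$ of eigenvectors, so that $A_{kl}=\sum_r\lambda_r\,u_{r,k}\overline{u_{r,l}}$. Substituting this into the quadratic form and interchanging the finite sums gives
$$\sum_{k,l=1}^n c_k\bar c_l\,A_{kl}B_{kl}=\sum_r\lambda_r\sum_{k,l=1}^n\bigl(c_k u_{r,k}\bigr)\overline{\bigl(c_l u_{r,l}\bigr)}\,B_{kl}.$$
For each fixed $r$, the inner double sum is the quadratic form of $B$ evaluated at the vector with entries $c_k u_{r,k}$, hence nonnegative because $B$ is positive semidefinite; since $\lambda_r\ge0$, the whole expression is nonnegative, which is exactly the claim.

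There is essentially no serious obstacle here: the result is the Schur product theorem, and the only point requiring care is the bookkeeping that turns the kernel condition into the matrix condition and back. Equivalently, one could avoid the spectral decomposition entirely by using Gram representations: positive semidefiniteness of $A$ and $B$ yields vectors $a_k,b_k$ in suitable finite-dimensional Hilbert spaces with $A_{kl}=\langle a_k,a_l\rangle$ and $B_{kl}=\langle b_k,b_l\rangle$, whence $A_{kl}B_{kl}=\langle a_k\otimes b_k,\,a_l\otimes b_l\rangle$ is again a Gram matrix and therefore positive semidefinite. Either route furnishes the desired inequality, and since the statement has already been flagged as a consequence of Lemma 3.2 of \cite{BF}, citing that lemma after performing the reduction to finite point configurations would also suffice.
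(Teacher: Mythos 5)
Your proposal is correct and follows essentially the same route as the paper: the paper simply observes that the statement reduces to the Schur (Hadamard) product theorem for positive semidefinite matrices and cites Lemma 3.2 of \cite{BF} for that fact. You additionally supply a self-contained proof of the Schur product theorem via the spectral (or Gram) decomposition, which is a correct and standard argument; otherwise the reduction to finite point configurations is identical.
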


Moreover, for $f\in C_b(D)$, the positive definiteness of $f$ on $D$ is
related to that of $F_f$ on $X\times X$. We begin with the following observation:

\begin{lemma}\label{pd-1}
Let  $f\in C_b(D)$. If $F_f$ is positive definite, 
then $f$ is positive definite on the hypergroup $(D,*)$.
\end{lemma}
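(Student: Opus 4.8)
The plan is to connect positive definiteness of $f$ on the hypergroup $(D,*)$ directly to positive definiteness of the kernel $F_f$ on $X \times X$ by choosing suitable test points in $X$. Recall that $f$ is positive definite on $(D,*)$ means that for all $n$, all $i_1,\dots,i_n \in D$ and all $c_1,\dots,c_n \in \mathbb{C}$, the sum $\sum_{k,l} c_k \bar c_l\, f(i_k * \bar i_l)$ is nonnegative, where $f(i_k * \bar i_l) = \int_D f\, d(\delta_{i_k} * \delta_{i_l}^-)$ unwinds via the convolution of Proposition \ref{prp-asso-to-hypergroup} into a weighted sum of the values $f(m)$ against the intersection numbers.

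First I would fix $x_0 \in X$ and, for each index $i \in D$, consider the set $\{z \in X : (x_0,z) \in R_i\}$, which by Definition \ref{def-valency} has exactly $\omega_i$ elements. The idea is to realize the hypergroup convolution coefficients as counting data inside $X \times X$ anchored at $x_0$. More precisely, I want to show that for $i,j \in D$,
\begin{equation}
\sum_{\substack{z:\,(x_0,z)\in R_i\\ w:\,(x_0,w)\in R_j}} F_f(z,w)
= \omega_i \omega_j \int_D f\, d(\delta_{\bar i} * \delta_j),
\notag
\end{equation}
which follows because $F_f(z,w) = f(m)$ when $(z,w) \in R_m$, and the number of pairs $(z,w)$ with $(x_0,z)\in R_i$, $(x_0,w)\in R_j$, and $(z,w)\in R_m$ is governed by the intersection numbers $p_{\bar i, j}^m$ (one uses $(z,x_0)\in R_{\bar i}$). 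This is exactly the translation between the Bose–Mesner picture and the convolution of Proposition \ref{prp-asso-to-hypergroup}, carried out via the valency identities in Lemma \ref{multass}.

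Given such indices $i_1,\dots,i_n \in D$ and coefficients $c_1,\dots,c_n$, I would then assemble a finite family of points in $X$: for each $k$, take the $\omega_{i_k}$ points $z$ with $(x_0,z)\in R_{i_k}$, and attach to each the scalar $c_k/\omega_{i_k}$ (a normalization that accounts for the valency weights appearing in the convolution formula). Applying the hypothesis that $F_f$ is positive definite to this combined list of points and scalars yields a nonnegative quadratic form; by the counting identity above, this quadratic form equals, up to the positive normalizing factors, precisely $\sum_{k,l} c_k \bar c_l\, f(i_k * \bar i_l)$. Hence $f$ is positive definite on $(D,*)$.

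The main obstacle I expect is the bookkeeping in the counting step: making sure the valency normalizations $1/\omega_{i_k}$ match the coefficients $\omega_k/(\omega_i \omega_j)$ in the convolution $\delta_i * \delta_j$, and correctly accounting for the involution (the switch from $R_i$ to $R_{\bar i}$ when one of the entries is conjugated). Because positive definiteness on the hypergroup involves $f(i_k * \bar i_l)$ rather than $f(i_k * i_l)$, I must be careful that the anchored sets at $x_0$ are indexed so that reversing a path corresponds to the conjugation $\bar{\phantom{i}}$; the identity $F_f(z,w) = \overline{F_f(w,z)}$ forced by positive definiteness, together with Lemma \ref{multass}(2), should make these match. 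Once the indices are aligned, the nonnegativity is immediate from the assumed positive definiteness of $F_f$, so the entire difficulty is concentrated in verifying the combinatorial identity relating the anchored point-counts to the hypergroup convolution coefficients.
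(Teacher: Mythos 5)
Your proposal is correct and is essentially the paper's own argument in unwound form: the weighted test vector you build (weight $c_k/\omega_{i_k}$ on the sphere $\{z:(x_0,z)\in R_{i_k}\}$) is exactly the $x_0$-th row of the matrix $S_\mu=\sum_k c_k S_{i_k}$, and your quadratic form is the $(x_0,x_0)$ diagonal entry of $S_\mu\cdot F_f\cdot\overline{S_\mu}^T$, which is what the paper computes via the Bose--Mesner isomorphism $S_{\mu*f*\mu^*}=S_\mu S_f\overline{S_\mu}^T$ together with Lemma \ref{multass}. Your counting identity checks out (it is Lemma \ref{multass}(4) in disguise), and the involution bookkeeping you flag is the same $f$ versus $f^-$ adjustment the paper makes at the end of its proof.
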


\begin{proof} For  $f$ define the reflected function $f^-(x) :=f(\bar x)$ for
  $x\in D$.
Let $n\in\mathbb N$, $x_1,\ldots, x_n\in D$ and
  $c_1,\ldots,c_n\in\mathbb C$. Let $\mu:=\sum_{k=1}^n c_k\delta_{x_k}\in M_f(D)$
  and observe that by the definition of the convolution of a
 function with a measure and by commutativity,
$$P:=\sum_{k,l=1}^n c_k \bar c_l \cdot f^-(x_k*\bar x_l)
 =\int_D  f^-\> d(\mu*\mu^*)=(\mu*\mu^**f)(e)= (\mu*f*\mu^*)(e).$$
where by the considerations above, this is equal to the diagonal entries of 
$$S_{\mu*f*\mu^*}= S_\mu \cdot S_{f}\cdot \overline{S_\mu}^T=
S_\mu\cdot F_f(x,y)_{x,y\in X}\cdot\overline{S_\mu}^T .$$
This matrix product exists and is positive semidefinite, as so is
$F_f(y,x)_{x,y\in X}$ by our assumption. As the diagonal entries of a positive 
semidefinite matrix are nonnegative, we obtain  $P\ge 0$.
Hence,  $f^-$ and thus $f$ is positive definite. 
\end{proof}

 Here is a partially reverse statement.

\begin{lemma}\label{pd-2}
Let  $f\in C_c(D)$. Then, by \ref{facts-hy}(2),  
$f*f^*$ is positive definite on $D$, and
 $F_{f*f^*}$ is positive definite.
\end{lemma}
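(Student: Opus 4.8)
The plan is to reduce the second assertion to the elementary fact that any matrix of the form $M\overline{M}^T$ is positive semidefinite. The first assertion---that $f*f^*$ is positive definite on the hypergroup $(D,*)$---is exactly the cited Facts~\ref{facts-hy}(2) applied to $f\in C_c(D)\subset L^2(D)$, so the entire work lies in showing that the kernel $F_{f*f^*}$ on $X\times X$ is positive definite. By the construction preceding the definition of positive definiteness, its kernel matrix is $\bigl(F_{f*f^*}(x,y)\bigr)_{x,y\in X}=S_{f*f^*}$, and so it suffices to exhibit $S_{f*f^*}$ as such a Gram-type product.

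First I would rewrite $S_{f^*}$ as a conjugate transpose. Since $D$ is discrete, $f\in C_c(D)$ has finite support, and so does $f^*(i)=\overline{f(\bar i)}$; in particular $S_f=\sum_{i\in D}f(i)A_i$ is a finite linear combination of adjacency matrices, each having only finitely many nonzero entries in every row and column by \ref{def-adjacency-matrix}(4). Hence all matrix products occurring below are well defined. Reindexing the sum and invoking $A_{\bar i}=A_i^T$ from \ref{def-adjacency-matrix}(3) yields
\begin{equation*}
S_{f^*}=\sum_{i\in D}\overline{f(\bar i)}\,A_i=\sum_{j\in D}\overline{f(j)}\,A_{\bar j}=\sum_{j\in D}\overline{f(j)}\,A_j^T=\overline{S_f}^T .
\end{equation*}
Next, since $f\mapsto S_f$ is an algebra isomorphism between $(C_c(D),*)$ and $A(X)$ by Remark~\ref{isomorphe-algebren}, applying it to $f*f^*$ gives $S_{f*f^*}=S_f\,S_{f^*}=S_f\,\overline{S_f}^T$.

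It then remains to verify that $M\overline{M}^T$ defines a positive definite kernel for $M=S_f$. Indeed, for any $x_1,\dots,x_n\in X$ and $c_1,\dots,c_n\in\mathbb C$,
\begin{equation*}
\sum_{k,l=1}^n c_k\bar c_l\,\bigl(S_f\,\overline{S_f}^T\bigr)_{x_k,x_l}=\sum_{z\in X}\Bigl|\sum_{k=1}^n c_k\,(S_f)_{x_k,z}\Bigr|^2\ge 0,
\end{equation*}
the interchange of summations being justified because only finitely many of the entries $(S_f)_{x_k,z}$ are nonzero. This establishes that $F_{f*f^*}$ is positive definite. I do not expect a genuine obstacle here; the only point demanding attention is the well-definedness and absolute convergence of the (a priori infinite) matrix products, which is secured by the compact, hence finite, support of $f$ together with property \ref{def-adjacency-matrix}(4).
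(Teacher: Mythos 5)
Your proof is correct and follows essentially the same route as the paper, which simply notes that $(F_{f*f^*}(x,y))_{x,y\in X}=S_{f*f^*}=S_f\overline{S_f}^T$ and concludes positive definiteness; you have merely spelled out the details (the identity $S_{f^*}=\overline{S_f}^T$ via $A_{\bar i}=A_i^T$, the use of the algebra isomorphism, and the Gram-matrix computation) that the paper leaves implicit. No issues.
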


\begin{proof} By our considerations above,
$(F_{f*f^*}(x,y))_{x,y\in X} = S_{f*f^*}=S_f\overline{S_f}^T.$
This proves that  $F_{f*f^*}$ is positive definite.
\end{proof}

We now combine Lemma \ref{pd-2} with Fact \ref{facts-hy}(4) and the trivial
observation that pointwise limits of positive definite functions are 
positive definite. This shows:

\begin{corollary}\label{pd3}
Let $\alpha\in S\subset\hat D$ be a character in the support
 of the Plancherel measure. Then $F_\alpha:X\times X\to\mathbb C$ 
is positive definite.
\end{corollary}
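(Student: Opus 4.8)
The plan is to exhibit $\alpha$ as a pointwise limit of functions of the form $h*h^*$ and to transport the positive definiteness of the associated kernels to $F_\alpha$ through a routine limiting argument. Concretely, since $\alpha\in S$, Fact \ref{facts-hy}(4) supplies functions $h_n\in C_c(D)$ with $h_n*h_n^*\to\alpha$ in the compact-uniform topology, and in particular pointwise on $D$. By Lemma \ref{pd-2}, every kernel $F_{h_n*h_n^*}\colon X\times X\to\mathbb C$ is positive definite.

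The bridge between the two levels is that the map $f\mapsto F_f$ is evaluation-wise: if $(x,y)\in R_i$, then $F_f(x,y)=f(i)$, so the pointwise convergence $h_n*h_n^*(i)\to\alpha(i)$ on $D$ immediately yields the pointwise convergence $F_{h_n*h_n^*}(x,y)\to F_\alpha(x,y)$ on $X\times X$. It then remains to observe that pointwise limits of positive definite kernels are positive definite. Indeed, for any $m\in\mathbb N$, any $x_1,\dots,x_m\in X$ and $c_1,\dots,c_m\in\mathbb C$, each finite sum $\sum_{k,l=1}^m c_k\bar c_l\,F_{h_n*h_n^*}(x_k,x_l)$ is nonnegative and converges to $\sum_{k,l=1}^m c_k\bar c_l\,F_\alpha(x_k,x_l)$, which is therefore nonnegative as well. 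Hence $F_\alpha$ is positive definite.

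I do not anticipate any genuine difficulty here. The one point to keep in mind is that Fact \ref{facts-hy}(4) is phrased in terms of compact-uniform convergence on $D$, so I only extract the pointwise convergence it entails and run the limiting argument with finitely many fixed points $x_1,\dots,x_m$, where interchanging the limit with a finite sum is trivially justified. No uniformity at the level of $X\times X$ is needed.
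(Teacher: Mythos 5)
Your argument is correct and is exactly the paper's proof: the paper likewise combines Lemma \ref{pd-2} with Fact \ref{facts-hy}(4) and the observation that pointwise limits of positive definite kernels are positive definite, merely stating in one sentence what you have spelled out. The extra care you take in passing from compact-uniform convergence on $D$ to pointwise convergence of the kernels on $X\times X$ via the evaluation-wise definition of $F_f$ is a faithful elaboration, not a deviation.
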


Lemmas \ref{prod-pos} and \ref{pd-1} now lead to the
following central result of this paper:

\begin{theorem}\label{main}
 Let $(D,*)$ be a commutative discrete hypergroup which is associated with
 some association scheme $\Lambda=(X,D,(R_i)_{i\in D})$.
Let $\alpha,\beta\in S\subset\hat D$ be  characters in the support
 of the Plancherel measure. Then $\alpha\cdot\beta$ 
is  positive definite  on $D$, and 
there exists a unique probability 
measure $\delta_\alpha\hat*\delta_\beta\in M^1(\hat D)$ with 
$(\delta_\alpha\hat*\delta_\beta)^\vee=\alpha\cdot\beta$.
The support of
this measure is contained in $S$.

Furthermore, for all $\alpha\in S$, 
the unique positive character $\alpha_0$ in $S$ according to 
 \ref{facts-hy}(5)
is contained in the support of
$\delta_\alpha\hat*\delta_{\bar\alpha}$.
\end{theorem}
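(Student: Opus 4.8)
The plan is to read off the positive definiteness of $\alpha\cdot\beta$ from the kernel picture on $X\times X$ and then to control the spectral support by approximation. For the first assertion I would start from Corollary \ref{pd3}: since $\alpha,\beta\in S$, the kernels $F_\alpha$ and $F_\beta$ are positive definite on $X\times X$. Because $F_\alpha\cdot F_\beta=F_{\alpha\beta}$ pointwise (both sides equal $\alpha(i)\beta(i)$ on $R_i$), Lemma \ref{prod-pos} gives that $F_{\alpha\beta}$ is positive definite, and Lemma \ref{pd-1} then yields that $\alpha\cdot\beta\in C_b(D)$ is positive definite on $(D,*)$. Bochner's theorem \ref{facts-hy}(1) now supplies a unique $\mu\in M_b^+(\hat D)$ with $\check\mu=\alpha\cdot\beta$; since $(\alpha\cdot\beta)(e)=\alpha(e)\beta(e)=1$, the same fact forces $\mu$ to be a probability measure, which we name $\delta_\alpha\hat*\delta_\beta$.

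For the inclusion $\mathrm{supp}\,(\delta_\alpha\hat*\delta_\beta)\subset S$ I would pass through compactly supported functions. By \ref{facts-hy}(4) choose $h_n,g_n\in C_c(D)$ with $h_n*h_n^*\to\alpha$ and $g_n*g_n^*\to\beta$ compact-uniformly, and set $w_n:=(h_n*h_n^*)\cdot(g_n*g_n^*)$. Each $w_n$ lies in $C_c(D)$, and $F_{w_n}=F_{h_n*h_n^*}\cdot F_{g_n*g_n^*}$ is positive definite by Lemmas \ref{pd-2} and \ref{prod-pos}, so $w_n$ is positive definite on $D$ by Lemma \ref{pd-1}. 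Using $|f|\le f(e)$ for positive definite $f$ together with $w_n(e)=\|h_n\|_2^2\,\|g_n\|_2^2\to1$, one checks $w_n\to\alpha\cdot\beta$ compact-uniformly. The decisive point is that the Bochner measure $\mu_n$ of $w_n$ is supported in $S$: since $w_n\in C_c(D)\subset L^1(D)\cap L^2(D)$, a Parseval computation identifies $\mu_n=\widehat{w_n}\cdot\pi$, whence $\mathrm{supp}\,\mu_n\subset\mathrm{supp}\,\pi=S$. By \ref{facts-hy}(3) each $w_n$ is then a compact-uniform limit of functions $h*h^*$, and a diagonal argument (trivial here, as $D$ is discrete so compact-uniform convergence is pointwise on finite sets) makes $\alpha\cdot\beta$ such a limit as well; applying \ref{facts-hy}(3) in the reverse direction gives $\mathrm{supp}\,(\delta_\alpha\hat*\delta_\beta)\subset S$.

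The \emph{main obstacle} is exactly this support statement. The Schur factorisation underlying Lemma \ref{prod-pos} does not stay inside the algebra $A(X)$, so one cannot hope to write $w_n$ directly as a single $k*k^*$; the honest route is the Plancherel identification $\mu_n=\widehat{w_n}\cdot\pi$ for compactly supported positive definite functions (available from \cite{BH}, \cite{J}), which is precisely where the Plancherel measure enters.

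For the final assertion I would apply the previous part with $\beta=\bar\alpha$. First note that $S$ is invariant under the involution $\alpha\mapsto\overline\alpha$ of $\hat D$: conjugating $\alpha=\lim h_n*h_n^*$ gives $\overline\alpha=\lim \overline{h_n}*(\overline{h_n})^*$, so $\bar\alpha\in S$ by \ref{facts-hy}(4) and $\delta_\alpha\hat*\delta_{\bar\alpha}$ is defined with support in $S$. Its inverse transform is $\alpha\cdot\bar\alpha=|\alpha|^2\ge0$ on $D$. Hence \ref{facts-hy}(6) applies and guarantees that $\mathrm{supp}\,(\delta_\alpha\hat*\delta_{\bar\alpha})$ contains at least one positive character. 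Since this support lies in $S$, and by \ref{facts-hy}(5) there is exactly one positive character $\alpha_0$ in $S$, that character must be $\alpha_0$; therefore $\alpha_0\in\mathrm{supp}\,(\delta_\alpha\hat*\delta_{\bar\alpha})$, as claimed.
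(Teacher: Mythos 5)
Your proof is correct and, for the heart of the theorem, identical to the paper's: positive definiteness of $\alpha\cdot\beta$ via Corollary \ref{pd3}, Lemma \ref{prod-pos} and Lemma \ref{pd-1}, followed by Bochner's theorem \ref{facts-hy}(1). The only difference is that the paper disposes of both support assertions by citing Theorem 2.1(4) of \cite{V3}, whereas you reprove them directly -- the inclusion $\mathrm{supp}\,(\delta_\alpha\hat*\delta_\beta)\subset S$ via the approximation $w_n=(h_n*h_n^*)(g_n*g_n^*)$ and Facts \ref{facts-hy}(3),(4), and the statement about $\alpha_0$ via Facts \ref{facts-hy}(5),(6); both arguments are sound (and are essentially the content of the cited result).
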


\begin{proof}
\ref {pd3}, \ref{prod-pos}, and \ref{pd-1} show
that  $\alpha\cdot\beta$ 
is  positive definite. Bochner's theorem  \ref{facts-hy}(1) now leads
to the probability measure $\delta_\alpha\hat*\delta_\beta\in M^1(\hat D)$.
The assertions about the supports of $\delta_\alpha\hat*\delta_\beta$ and
$\delta_\alpha\hat*\delta_{\bar\alpha}$ follow from Theorem 2.1(4) of \cite{V3}.
\end{proof}

For finite association schemes we  have a stronger
result. It is shown in Section 2.10 of \cite{BI} in another, but finally
equivalent way:

\begin{theorem}\label{main1} 
Let $(D,*)$ be a finite commutative  hypergroup which is associated with
 some association scheme $\Lambda=(X,D,(R_i)_{i\in D})$. Then $(\hat D, \hat *)$
is a hypergroup.
\end{theorem}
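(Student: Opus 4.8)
The plan is to upgrade the statement of Theorem \ref{main} from ``dual positive convolution on $S$'' to a genuine dual hypergroup structure on all of $\hat D$, exploiting the finiteness of $D$. The key observation is that when $D$ is finite, the Plancherel measure has full support, i.e.\ $S=\hat D$. Indeed, for a finite commutative hypergroup the characters in $\hat D$ form a finite set, the Fourier transform is an isomorphism of finite-dimensional algebras, and the Plancherel measure assigns strictly positive mass to every character; this is standard and can be extracted from the references \cite{J}, \cite{BH} already invoked. Consequently the hypotheses of Theorem \ref{main} are automatically met for \emph{every} pair $\alpha,\beta\in\hat D$, not merely for those in a proper support.

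First I would establish $S=\hat D$ and then apply Theorem \ref{main} to conclude that for all $\alpha,\beta\in\hat D$ the product $\alpha\cdot\beta$ is positive definite and hence, by Bochner's theorem \ref{facts-hy}(1), equals $(\delta_\alpha\hat*\delta_\beta)^\vee$ for a unique probability measure $\delta_\alpha\hat*\delta_\beta\in M^1(\hat D)$. Since $S=\hat D$ is now a finite set, this measure is a finite convex combination $\sum_\gamma c_{\alpha,\beta}^\gamma\,\delta_\gamma$ with nonnegative coefficients summing to $1$. Defining the dual convolution $\hat*$ on $M_b(\hat D)$ by bilinear extension of $\delta_\alpha\hat*\delta_\beta$, I would then verify the hypergroup axioms of Definition~2.1 one at a time. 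The candidate identity is the trivial character $\mathbf 1$, and the candidate involution is complex conjugation $\alpha\mapsto\bar\alpha$.

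The verification proceeds as follows. Commutativity of $\hat*$ is immediate from the commutativity of pointwise multiplication of characters. Associativity follows because $(\alpha\beta)\gamma=\alpha(\beta\gamma)$ as functions, so the two iterated convolutions have the same inverse Fourier transform and hence coincide by uniqueness in Bochner's theorem. That $\mathbf 1$ is the identity is clear since $\mathbf 1\cdot\alpha=\alpha$ gives $\delta_{\mathbf 1}\hat*\delta_\alpha=\delta_\alpha$. For the involution axioms I would use that $\alpha\cdot\bar\alpha$ is positive definite with $(\alpha\bar\alpha)(e)=1$, and invoke the final assertion of Theorem~\ref{main} together with \ref{facts-hy}(5): the positive character $\alpha_0=\mathbf 1$ lies in the support of $\delta_\alpha\hat*\delta_{\bar\alpha}$, which supplies the axiom that the identity appears in $\mathrm{supp}(\delta_\alpha\hat*\delta_\beta)$ precisely when $\beta=\bar\alpha$; the ``only if'' direction comes from a support computation showing $\mathbf 1\in\mathrm{supp}(\delta_\alpha\hat*\delta_\beta)$ forces $\alpha\bar\beta$ to integrate $\mathbf 1$ with positive weight, which pins down $\beta=\bar\alpha$. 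Axiom~(4) relating involution and convolution follows from $\overline{\alpha\beta}=\bar\alpha\,\bar\beta$.

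The main obstacle I anticipate is the involution axiom~2.1(3), specifically the ``only if'' half asserting that $\mathbf 1\in\mathrm{supp}(\delta_\alpha\hat*\delta_\beta)$ implies $\beta=\bar\alpha$; the positive-definiteness machinery readily gives the easy direction but the converse requires a genuine argument about which characters can carry mass, most cleanly handled via the Plancherel orthogonality relations on the finite dual. Everything else is either formal (associativity, commutativity, identity via uniqueness in Bochner's theorem) or a direct transcription of the self-dual finite theory; the topological continuity conditions in Definition~2.1(1) are vacuous since $\hat D$ is finite and discrete. I would therefore concentrate the detailed work on the support statement and otherwise cite the parallel development in Section~2.10 of \cite{BI}, to which the statement already defers.
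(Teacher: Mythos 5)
Your proposal is correct and follows essentially the same route as the paper: note that $S=\hat D$ for finite $D$ so that the unique positive character in $S$ is $\mathbf 1$, get everything except the ``only if'' half of the involution axiom from Theorem \ref{main}, and settle that remaining point by computing $(\delta_\alpha\hat*\delta_{\bar\beta})(\{\mathbf 1\})=\int_D\alpha\bar\beta\,d\Omega$ and invoking the orthogonality relations of characters with respect to the Haar measure. The paper's proof is exactly this computation, so no further comparison is needed.
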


\begin{proof} For finite hypergroups we have $S=\hat D$, and the unique
positive character in $S$ is the identity ${\bf 1}$. Therefore, 
if we take ${\bf 1}$ as identity and 
complex conjugation as involution, we see that almost all hypergroup
properties  of $(\hat D,\hat *)$ follow from Theorem \ref{main}.
We only have to check that for $\alpha\ne\beta\in \hat D$,
 ${\bf 1}$ is not contained in the support of 
$\delta_\alpha\hat *\delta_{\bar\beta}$. For this we recapitulate that
 for all $\gamma,\rho \in\hat D$, 
$\hat\gamma(\rho)=\int\gamma\bar\rho\>d\Omega=\|\gamma\|_2^2 \delta_{\gamma,\rho}$ with the Kronecker-$\delta$.
Therefore, with 12.16 of \cite{J},
\begin{align} (\delta_\alpha\hat *\delta_{\bar\beta})(\{{\bf 1}\})&=
\int_{\hat D}  {\bf 1}_{\{{\bf 1}\}}\> d(\delta_\alpha\hat *\delta_{\bar\beta})= 
\int_{\hat D} \hat{\bf 1}\> d(\delta_\alpha\hat *\delta_{\bar\beta}) =\notag\\
&=\int_{ D} {\bf 1}\> (\delta_\alpha\hat *\delta_{\bar\beta})^\vee\> d\Omega = \int_{ D} \alpha\bar\beta \> d\Omega=
\|\gamma\|_2^2 \delta_{\alpha,\beta}=0.
\notag\end{align}
This completes the proof.
\end{proof}

We present some examples related to Gelfand pairs which show that the
 infinite case is more involved than in Theorem \ref{main1}; 
for details see \cite{V4}.

\begin{example}\label{ex-dist}
 Let $a,b\ge2$ be integers. Let  $C_b$ the complete undirected graph
graph with $b$
 vertices, i.e.,  all vertices of $C_b$ are connected.
We now consider the infinite
   graph $\Gamma:=\Gamma(a,b)$  where
 precisely $a$ copies of the graph $C_b$ are tacked together at each vertex 
in a tree-like way, i.e., there are no other cycles in  $\Gamma$ than
those in a  copy  of $C_b$. For $b=2$, $\Gamma$ is the homogeneous tree of
valency $a$. We denote the distance function on $\Gamma$ by $d$.

It is  clear
 that the group $G:=Aut(\Gamma)$
of all graph automorphisms acts on $\Gamma$ in a  distance-transitive way, i.e.,
for all $v_1,v_2,v_3,v_4\in\Gamma$ with
$d(v_1,v_3)=d(v_3,v_4)$ there exists  $g\in\Gamma$
 with $g(v_1)=v_3$ and $g(v_2)=v_4$.
 $Aut(\Gamma)$ is a totally disconnected, locally compact group w.r.t.~the
 topology of pointwise convergence, and
 the stabilizer subgroup $H\subset G$ of any fixed vertex
$e\in\Gamma$ is  compact and open. We  identify $G/H$ with
$\Gamma$, and $G//H$ with $\mathbb N_0$  by distance transitivity.
We now study the association scheme
  $\Lambda=(\Gamma\simeq G/H,\mathbb N_0=G//H, (R_i)_{i\in\mathbb N_0}) $
and the  double coset hypergroup $(\mathbb N_0\simeq G//H, *)$.
As in the case of finite  distance-transitive graphs in \cite{BI}, 
 $\Lambda$ and  $(\mathbb N_0, *)$ are commutative and associated with a
sequence of orthogonal polynomials in the Askey scheme \cite{AW} .

More precisely, by \cite {V1a}, the hypergroup convolution is given by
\begin{equation}\label{faltung}
\delta_m*\delta_n = \sum_{k=|m-n|}^{m+n} g_{m,n,k} \delta_k\in  M^1(\mathbb
N_0)
\quad\quad  (m,n\in\mathbb N_0)
\end{equation}
with
$$ g_{m,n, m+n}= \frac{a-1}{a}>0, \quad
g_{m,n, |m-n|}=  \frac{1}{a(a-1)^{m\wedge n-1} (b-1)^{m\wedge n}}>0,$$
$$g_{m,n,|m-n|+2k+1}= \frac{b-2}{ a(a-1)^{m\wedge n-k-1}(b-1)^{m\wedge n-k}}\ge0
\quad  (k=0,\ldots ,m\wedge n-1), $$
$$g_{m,n,|m-n|+2k+2}= \frac{a-2}{(a-1)^{m\wedge n-k-1}(b-1)^{m\wedge n-k-1}}\ge0
\quad (k=0,\ldots,m\wedge n-2).$$
The Haar weights are  given by $\omega_0:=1$, 
$\omega_n=a(a-1)^{n-1}(b-1)^n \quad (n\ge1)$.
Using
$$g_{n,1,n+1}=\frac{a-1}{a }, \quad 
g_{n,1,n}=  \frac{b-2}{a(b-1)}, \quad
 g_{n,1,n-1}=\frac{1}{a(b-1)},$$
we define a sequence of orthogonal polynomials
$(P_n^{(a,b)})_{n\ge0}$ by  
 $$P_0^{(a,b)}:=1, \quad\quad 
P_1^{(a,b)}(x):= \frac{2}{a}\cdot\sqrt{\frac{a-1}{b-1}}\cdot x +
\frac{b-2}{a(b-1)},$$
and the three-term-recurrence relation
\begin{equation}\label{recu}
P_1^{(a,b)}P_n^{(a,b)}= \frac{1}{a(b-1)}P_{n-1}^{(a,b)}
 + \frac{b-2}{a(b-1)} P_n^{(a,b)} +
\frac{a-1}{a }P_{n+1}^{(a,b)} \quad\quad(n\ge1) .  
\end{equation}
 Then, 
\begin{equation}\label{prodcartier}\textstyle
 P_m^{(a,b)}P_n^{(a,b)}= \sum_{k=|m-n|}^{m+n} g_{m,n,k}P_k^{(a,b)} \quad (m,n\ge0). 
\end{equation}

We also notice that  the  formulas above are correct for all 
 $a,b\in[2,\infty[$, 
and that
Eq.~(\ref{faltung}) then still  defines  commutative  hypergroups 
 $(\mathbb N_0,*)$.

We discuss some  properties of the  $P_n^{(a,b)}$ from 
\cite{V1a}, \cite{V4}. 
Eq.~(\ref{recu}) yields
\begin{equation}\label{potenz}
P_n^{(a,b)}\bigl(\frac{z+z^{-1}}{2}\bigr)= \frac{c(z)z^n +c(z^{-1})z^{-n}}
{((a-1)(b-1))^{n/2}} \quad\quad\text{for}\>\> z\in\mathbb C\setminus\{0, \pm 1\}
\end{equation}
with
\begin{equation}\textstyle
c(z):=\frac{(a-1)z -z^{-1} +(b-2)(a-1)^{1/2}(b-1)^{-1/2}}
{a(z-z^{-1})}.
\end{equation}
We  define
\begin{equation}\label{xnull}
 s_0:=  s_0^{(a,b)}:= \frac{2-a-b}{ 2\sqrt{(a-1)(b-1)}}, 
\quad\quad
s_1:= s_1^{(a,b)}:=\frac{ ab -a-b+2}{ 2\sqrt{(a-1)(b-1)}}.
\end{equation}
Then
\begin{equation}\label{reinerpofall}
P_n^{(a,b)}\left(s_1\right)=1,\quad\quad
 P_n^{(a,b)}\left(s_0 \right)=(1-b)^{-n} \quad\quad(n\ge0).
\end{equation}
It is shown in \cite{V4}
 that the
 $P_n^{(a,b)}$ fit into the Askey-Wilson
 scheme (pp.~26--28 of 
 \cite{AW}).
 By the orthogonality relations in \cite{AW}, the 
normalized orthogonality measure $\rho=\rho^{(a,b)}\in M^1(\mathbb R)$
 is 
\begin{equation}\label{orthohne}
d\rho^{(a,b)}(x)= w^{(a,b)}(x)dx\Bigr|_{[-1,1]} 
\quad\quad{\rm for}\quad a\ge b\ge 2
\end{equation}
and 
\begin{equation}\label{orthmit}
d\rho^{(a,b)}(x)=w^{(a,b)}(x)
dx\Bigr|_{[-1,1]} + \frac{b-a}{b} d\delta_{s_0}
  \quad{\rm for}\quad b> a\ge 2
\end{equation}
with
$$w^{(a,b)}(x):=\frac{a}{2\pi} \cdot  \frac{(1-x^2)^{1/2}}{(s_1-x)(x-s_0)}.$$
For $a,b\in\mathbb R$ with $a,b\ge 2$, the numbers $s_0,s_1$ satisfy 
$$-s_1\le s_0\le -1 < 1\le s_1.$$
By
Eq.~(\ref{potenz}), we have the 
dual space
 $$\hat D \simeq
\{x\in\mathbb R:\> (P_n^{(a,b)}(x))_{n\ge0} \quad{\rm is \>\>
 bounded\>} \}= [-s_1, s_1].$$
This interval  contains the support $S:=supp\> \rho^{(a,b)}$ 
of the orthogonality measure, which is also equal to the Plancherel measure.
We have $S=\hat D $ precisely for $a=b=2$.
The following theorem from \cite{V4} shows that for these examples several
 phenomena appear, and that  Theorem \ref{main} 
cannot be improved considerably.
\end{example}

\begin{theorem}\label{podi} Let $a,b\ge 2$ be integers.
Let $G,H,\Gamma$, and $\Lambda$  be given as above. Then the following
 statements are equivalent for $x\in\mathbb R$:
\begin{enumerate}
\item[\rm{(1)}]  $x\in [s_0^{(a, b)}, s_1^{(a, b)}]$.
\item[\rm{(2)}] The mapping 
$\Gamma\times\Gamma\to\mathbb R$, 
 $(v_1,v_2)\longmapsto P_{d(v_1,v_2)}^{( a, b)}(x)$ is 
 positive definite;
\item[\rm{(3)}] The mapping $g \longmapsto  P_{d(gH,e)}^{( a, b)}(x)$
is  positive definite on $G$.
\end{enumerate}
Moreover, for all $x,y\in[s_0^{(a, b)}, s_1^{(a, b)}]$
 there exists a unique probability measure
 $\mu_{x,y}\in M^1([-  s_1^{(a, b)},  s_1^{(a, b)}])$ with
\begin{equation}\label{special-dist-prof}
P_n^{(a, b)}(x)\cdot P_n^{(a, b)}(y)= \int_{- s_1^{(a, b)}}^{s_1^{(a, b)}}
P_n^{(a, b)}(z )\> d\mu_{x,y}(z) \quad { for\>\> all}
\quad n\in\mathbb N_0.\end{equation}

Furthermore, for certain integers $b>a$ there exist 
$x,y\in[-s_1^{(a, b)},s_0^{(a, b)}[$ for which no probability measure 
$\mu_{x,y}\in M^1(\mathbb R)$ exists which satisfies 
(\ref{special-dist-prof}).
\end{theorem}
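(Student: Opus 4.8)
The plan is to treat the three equivalences first and then the two assertions on $\mu_{x,y}$ separately. Throughout I write $\alpha_x$ for the character $n\mapsto P_n^{(a,b)}(x)$ of the hypergroup $(\mathbb N_0,*)$, so that the kernel in (2) is exactly $F_{\alpha_x}(v_1,v_2)=P^{(a,b)}_{d(v_1,v_2)}(x)$ in the notation of Section~4. The equivalence $(2)\Leftrightarrow(3)$ is the standard correspondence between $G$-invariant positive definite kernels on $G/H\times G/H$ and $H$-biinvariant positive definite functions on $G$: since the graph metric $d$ is $G$-invariant, $F_{\alpha_x}$ is a $G$-invariant kernel on $\Gamma\times\Gamma=G/H\times G/H$, and $g\mapsto F_{\alpha_x}(gH,eH)=P^{(a,b)}_{d(gH,e)}(x)$ is its associated biinvariant function; positive definiteness transports in both directions by the usual GNS argument for Gelfand pairs.

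For $(1)\Rightarrow(2)$ I would split $[s_0,s_1]$ into the tempered part $[-1,1]$ and the two complementary intervals $[s_0,-1]$ and $[1,s_1]$. On $[-1,1]\subseteq S$ positive definiteness of $F_{\alpha_x}$ is immediate from Corollary~\ref{pd3}. On the complementary intervals I would use \eqref{potenz}: writing $x=(z+z^{-1})/2$ one checks $s_1\leftrightarrow z=\sqrt{(a-1)(b-1)}$ and $s_0\leftrightarrow z=-\sqrt{(a-1)/(b-1)}$, so that for such $x$ the parameter $z$ is real with $|z|\le\sqrt{(a-1)(b-1)}$ and \eqref{potenz} exhibits $P^{(a,b)}_n(x)$ as a combination $c(z)u_+^n+c(z^{-1})u_-^n$ with $u_\pm=z^{\pm1}/\sqrt{(a-1)(b-1)}\in[-1,1]$ and nonnegative coefficients $c(z),c(z^{-1})$. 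Hence $F_{\alpha_x}$ is a positive combination of the elementary kernels $(v_1,v_2)\mapsto u_\pm^{\,d(v_1,v_2)}$, whose positive definiteness on $\Gamma$ for real $u\in[-1,1]$ follows from the tree-like geometry of $\Gamma$ (unique geodesics running through the $C_b$-blocks). For $(2)\Rightarrow(1)$: if $|x|>s_1$ then $x\notin\hat D$ and $(P^{(a,b)}_n(x))_n$ is unbounded by \eqref{potenz}, contradicting the bound $|F_{\alpha_x}(v_1,v_2)|\le F_{\alpha_x}(v,v)=P_0^{(a,b)}(x)=1$ forced by positive definiteness; and for $x\in[-s_1,s_0[$ one produces a finite configuration of vertices for which the Gram matrix $(P^{(a,b)}_{d(v_i,v_j)}(x))_{i,j}$ has a strictly negative eigenvalue, the threshold being precisely the bottom $s_0$ of the complementary range.

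The positive product formula for $x,y\in[s_0,s_1]$ then follows by the same mechanism as Theorem~\ref{main}, now available on all of $[s_0,s_1]$ rather than only on $S$. Indeed, by $(1)\Leftrightarrow(2)$ both $F_{\alpha_x}$ and $F_{\alpha_y}$ are positive definite, so by Lemma~\ref{prod-pos} their pointwise product $F_{\alpha_x}\cdot F_{\alpha_y}=F_{\alpha_x\alpha_y}$ is positive definite; Lemma~\ref{pd-1} then shows that $n\mapsto P_n^{(a,b)}(x)P_n^{(a,b)}(y)$ is positive definite on the hypergroup, and Bochner's theorem \ref{facts-hy}(1) yields a unique $\mu_{x,y}\in M^1([-s_1,s_1])$ with $\check\mu_{x,y}=\alpha_x\alpha_y$, which is exactly \eqref{special-dist-prof}. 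It is a probability measure because $\check\mu_{x,y}(e)=P_0^{(a,b)}(x)P_0^{(a,b)}(y)=1$, and it is unique since the $P^{(a,b)}_n$ span the polynomials, which are dense in $C([-s_1,s_1])$.

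The last, negative assertion is the main obstacle. The point is that \eqref{special-dist-prof} prescribes all moments of $\mu_{x,y}$, so there is a unique signed measure on the compact set $[-s_1,s_1]$ satisfying it; the claim is that for suitable $b>a$ and $x,y\in[-s_1,s_0[$ this signed measure is not positive, whence no probability measure can work. The plan is to compute the representing measure explicitly from the orthogonality data \eqref{orthohne}--\eqref{orthmit}, expanding the reproducing kernel $z\mapsto\sum_n h_n^{-1}P^{(a,b)}_n(x)P^{(a,b)}_n(y)P^{(a,b)}_n(z)$ by means of \eqref{potenz}, and to isolate the atom it produces at $s_0$ (respectively $-s_0$). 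Here the regime $b>a$ is decisive: it is exactly when the orthogonality measure acquires the extra mass $\tfrac{b-a}{b}$ at $s_0$ in \eqref{orthmit}, and tracking this factor through the computation one finds that for $x,y$ close to $-s_1$ the corresponding atom of $\mu_{x,y}$ becomes negative. I expect this sign computation --- controlling the summed kernel together with the weights $c(z)$ near the endpoints --- to be the genuinely technical heart of the argument; everything else is bookkeeping with \eqref{potenz} and the lemmas of Section~4.
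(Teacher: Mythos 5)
First, a remark on the comparison itself: the paper does not prove Theorem \ref{podi} — it is quoted verbatim from \cite{V4} — so there is no internal proof to measure your argument against, and it has to be judged on its own terms. Judged so, there is a genuine gap in $(1)\Rightarrow(2)$, which is the load-bearing step (your treatment of $(2)\Leftrightarrow(3)$ and of the product formula via Lemma \ref{prod-pos}, Lemma \ref{pd-1} and Bochner is fine, but it all rests on $(1)\Rightarrow(2)$). Your plan on the complementary intervals is to write $P_n^{(a,b)}(x)=c(z)u_+^n+c(z^{-1})u_-^n$ with $u_\pm=z^{\pm1}/\sqrt{(a-1)(b-1)}$ and to claim (i) that the coefficients $c(z),c(z^{-1})$ are nonnegative there and (ii) that the kernels $(v_1,v_2)\mapsto u^{d(v_1,v_2)}$ are positive definite on $\Gamma$ for all $u\in[-1,1]$. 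Both claims are false. For (i): for $x\in\,]1,s_1[$, i.e. $z\in\,]1,\sqrt{(a-1)(b-1)}[$, the numerator of $c(z^{-1})$ is positive while its denominator $a(z^{-1}-z)$ is negative, so $c(z^{-1})<0$; concretely, for $a=b=3$ and $z=3/2$ one computes $c(z)=4/3$ and $c(z^{-1})=-1/3$. So your decomposition is not a positive combination. For (ii): restricting $u^{d}$ to a single complete block $C_b\subset\Gamma$ gives the matrix $(1-u)I+uJ$, which has the eigenvalue $1+(b-1)u<0$ whenever $u<-1/(b-1)$; for $b\ge3$ the geometric kernels therefore fail to be positive definite on part of $[-1,1]$, and "tree-like geometry" does not rescue the claim. (That same one-block computation is, incidentally, exactly the missing ingredient in your $(2)\Rightarrow(1)$: on $C_b$ the Gram matrix is $I+P_1^{(a,b)}(x)(J-I)$, positive semidefinite iff $P_1^{(a,b)}(x)\ge-1/(b-1)$, i.e. iff $x\ge s_0$ — you assert a bad finite configuration exists but never exhibit one.)

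A workable route for $(1)\Rightarrow(2)$ — and the one underlying \cite{V4} — is not a pointwise splitting of $P_n^{(a,b)}(x)$ into geometric terms but a positive integral representation $\alpha_x=\int_S\alpha_z\,d\nu_x(z)$ with $\nu_x\in M^1(S)$ for every $x\in[s_0^{(a,b)},s_1^{(a,b)}]$, i.e. the positive connection property of Section 5; combined with Fact \ref{facts-hy}(3) and the mechanism of Corollary \ref{pd3}, this exhibits $F_{\alpha_x}$ as a pointwise limit of positive definite kernels. Establishing that representation (via the $c$-function near the endpoints) is the actual work, and your sketch omits it. Finally, you explicitly defer the last, negative assertion (nonexistence of $\mu_{x,y}$ for certain $b>a$ and $x,y\in[-s_1^{(a,b)},s_0^{(a,b)}[$) to an unperformed sign computation, so that part is likewise not established. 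In short: the skeleton is right, but the two substantive claims of the theorem are not proved as written.
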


\begin{remark} The preceding examples show that for Gelfand pairs $(G,H)$ with
  $H$ a compact open subgroup, characters $\alpha\in (G//H)^\wedge$ of the
  double coset hypergroup may not correspond to a positive definite function
  on $G$ and not to a positive definite kernel on $G/H$. On the other hand, as
  in the examples in parts (2) and (3) of Theorem \ref{podi}, these positive
  definiteness conditions are equivalent in general. This well-known fact 
follows immediately
  from the definitions of positive definite functions on $G$ and 
positive definite kernel on $G/H$. 
\end{remark}

\section{Generalized  discrete association schemes}

In this section we propose a generalization
of  the definition of association schemes from Section 3 via
stochastic matrices. We focus on the commutative case and have
applications in mind to  dual product formulas as in Theorem
\ref{main}.

One might expect at a first glance 
that for each discrete commutative hypergroup $(D,*)$ there is a
natural kind of a generalized 
 association scheme with $X=D$ and transition matrices $S_i$
with $(S_i)_{j,k}:=(\delta_i*\delta_j)(\{k\})$ for $i,j,k\in D$. 
We then would obtain $F_f(x,y)= f(x*y)$  for $x,y\in D$ and $f\in C(D)$ 
with the notations in Section 4. These 
matrices $S_i$ and the 
functions  $F_f$ have much in common with the construction 
above. However, one central point is different in the view of dual product formulas:
In Section 4 we used the fact
 that the $R_i$ form  partitions  which led to the central identity
$F_{fg}=F_f F_g$  for functions $f,g$ on $D$. This is obviously usually 
not correct for $F_f(x,y)= f(x*y)$. In particular, such
a simple approach would lead to  a contradiction with Example 9.1C of Jewett
\cite{J}.

Having this  problem in mind, 
we propose a  definition which skips the
 integrality condition and which keeps
the  partition condition. The  integrality conditions will be replaced
by the existence of a positive invariant measure on $X$ which
replaces the counting measure on $X$ in  Sections 3 and 4.

\begin{definition}\label{def-gen-discrete-asso}
 Let $X, D$ be nonempty, at most countable 
sets and $(R_i)_{i\in D}$ a disjoint
  partition of $X\times X$ with $R_i\ne\emptyset$ for $i\in D$. 
Let $\tilde S_i\in \mathbb R^{X\times X}$ for $i\in D$ be stochastic
matrices. 
Assume that:
\begin{enumerate}
\item[\rm{(1)}] For all $i,j,k\in D$ and $(x,y)\in R_k$, the number
$$p_{i,j}^k:=|\{z\in X:\> (x,z)\in R_i\>\> {\rm and }\>\> (z,y)\in R_j\}|$$
is finite and independent of $(x,y)\in R_k$.
\item[\rm{(2)}] For all $i\in D$ and $x,y\in X$, $\tilde S_i(x,y)>0$ 
if and only if $(x,y)\in R_i$.
\item[\rm{(3)}] For all $i,j,k\in D$ there exist (necessarily nonnegative)  numbers
$\tilde p_{i,j}^k$ with $\tilde S_i\tilde S_j=\sum_{k\in D} \tilde p_{i,j}^k\tilde S_k$.
\item[\rm{(4)}] There exists an identity $e\in D$ with $\tilde S_e=I_X$ as identity matrix.
\item[\rm{(5)}] There exists a positive measure $\pi\in M^+(X)$ with
  $supp\>\pi =X$ and an involution $i\mapsto \bar i$ on $D$ 
such that
for all $i\in D$, $x,y\in X$,
  $$\pi(\{y\})\tilde  S_{\bar i}(y,x)=\pi(\{x\})\tilde   S_{ i}(x,y).$$
\end{enumerate}
Then $\Lambda:=(X,D, (R_i)_{i\in D},(\tilde  S_i)_{i\in D})$ 
is called a generalized association scheme. 

 $\Lambda$ is called commutative if $\tilde S_i\tilde S_j=\tilde S_j\tilde S_i$
 for all $i,j\in D$. It is called symmetric  if
the involution  is the identity. $\Lambda$ is called finite, if so are $X$ and  $D$.
\end{definition}

\begin{example}
If  $\Lambda=(X,D, (R_i)_{i\in D})$ 
is an unimodular association scheme as in Section 3, and if we take the 
stochastic matrices $ (S_i)_{i\in D}$ as in  Remark \ref{remark-asso-stoch},
 then 
$(X,D, (R_i)_{i\in D}, (S_i)_{i\in D})$  is a generalized association
scheme. In fact, axioms (1)-(4) are clear, and for axiom (5) we take the
involution of $\Lambda$ and $\pi$ as the counting measure on $X$. 
Lemma  \ref{unimod}(3)
and  unimodularity then imply axiom (5).
\end{example}

\begin{lemma}\label{basic-gen}
Let $(X,D, (R_i)_{i\in D}, (\tilde S_i)_{i\in D})$ be a generalized
  association scheme. Then:
\begin{enumerate}
\item[\rm{(1)}] The triplet $(X,D, (R_i)_{i\in D})$ 
is an association scheme. 
\item[\rm{(2)}] The positive measure $\pi\in M^+(X)$ from axiom (5) 
is invariant,
i.e.,
for all $y\in X$ and $i\in D$, 
$\sum_{x\in X} \pi(\{x\})\tilde  S_i(x,y)=  \pi(\{y\}).$
\item[\rm{(3)}] The deformed intersection numbers
$\tilde p_{i,j}^k$ 
  satisfy
 $\sum_{k\in D}\tilde p_{i,j}^k=1$ and
$$\tilde p_{i,j}^k>0 \quad \Longleftrightarrow 
\quad p_{i,j}^k>0\quad \quad (i,j,k\in D).  $$
\item[\rm{(4)}] For all $p\in[1,\infty[$ and $i\in D$,
the transition operator 
$$\textstyle T_{\tilde S_i}f(x):=\sum_{y\in X} \tilde S_i(x,y)f(y)
\quad (x\in X)$$
 is
 a continuous linear operator on $L^p(X,\pi)$ with 
$\|T_{\tilde S_i}\|\le1$. Moreover, for $p=2$, the $T_{\tilde S_i}$
satisfy the adjoint relation 
 $T_{\tilde S_{\bar i}}=T_{\tilde S_{ i}}^*$.
\end{enumerate}
\end{lemma}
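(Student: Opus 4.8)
The plan is to derive all four assertions directly from axioms (1)--(5) of Definition \ref{def-gen-discrete-asso}, treating the stochastic matrices $\tilde S_i$ as the primary objects and recovering the combinatorial data of the partition $(R_i)$ from their supports via axiom \ref{def-gen-discrete-asso}(2). For part (1) I would just verify the axioms of Definition \ref{def-discrete-asso}. Axiom \ref{def-discrete-asso}(3) is literally axiom \ref{def-gen-discrete-asso}(1). For \ref{def-discrete-asso}(1), axiom \ref{def-gen-discrete-asso}(4) gives $\tilde S_e=I_X$, so by the support description in \ref{def-gen-discrete-asso}(2) one has $(x,y)\in R_e \iff \tilde S_e(x,y)>0 \iff x=y$, i.e.\ $R_e=\{(x,x):x\in X\}$. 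For \ref{def-discrete-asso}(2), the symmetry relation in \ref{def-gen-discrete-asso}(5) together with $\pi(\{x\})>0$ for all $x$ shows $\tilde S_{\bar i}(y,x)>0 \iff \tilde S_i(x,y)>0$; invoking \ref{def-gen-discrete-asso}(2) again yields $R_{\bar i}=\{(y,x):(x,y)\in R_i\}$, so $i\mapsto\bar i$ is the required involution. Part (2) is then immediate: summing the identity in \ref{def-gen-discrete-asso}(5) over $x$ and using that $\tilde S_{\bar i}$ is row-stochastic gives $\sum_x \pi(\{x\})\tilde S_i(x,y)=\pi(\{y\})\sum_x \tilde S_{\bar i}(y,x)=\pi(\{y\})$.

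For part (3) the key is to evaluate the matrix identity in \ref{def-gen-discrete-asso}(3) entrywise. Applying $\tilde S_i\tilde S_j=\sum_k \tilde p_{i,j}^k\tilde S_k$ to the constant vector $\mathbf 1$ and using that each $\tilde S_k$ is stochastic gives $\sum_k \tilde p_{i,j}^k=1$. For the equivalence, fix $(x,y)\in R_k$. Since $(R_i)$ is a partition, axiom \ref{def-gen-discrete-asso}(2) forces $\tilde S_l(x,y)=0$ for $l\ne k$, so the expansion collapses to $(\tilde S_i\tilde S_j)(x,y)=\tilde p_{i,j}^k\tilde S_k(x,y)$ with $\tilde S_k(x,y)>0$; hence $\tilde p_{i,j}^k>0 \iff (\tilde S_i\tilde S_j)(x,y)>0$. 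On the other hand $(\tilde S_i\tilde S_j)(x,y)=\sum_z \tilde S_i(x,z)\tilde S_j(z,y)$ is a sum of nonnegative terms whose $z$-th summand is positive exactly when $(x,z)\in R_i$ and $(z,y)\in R_j$, and the number of such $z$ is $p_{i,j}^k$ by axiom \ref{def-gen-discrete-asso}(1). Thus $(\tilde S_i\tilde S_j)(x,y)>0 \iff p_{i,j}^k>0$, closing the chain.

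For part (4) I would prove the contraction property on $L^p(X,\pi)$ by Jensen's inequality. As each row $\tilde S_i(x,\cdot)$ is a probability measure, convexity of $t\mapsto|t|^p$ gives $|T_{\tilde S_i}f(x)|^p\le \sum_y \tilde S_i(x,y)|f(y)|^p$; integrating against $\pi$, interchanging the nonnegative sums by Tonelli, and using the invariance from part (2) yields $\|T_{\tilde S_i}f\|_p^p\le \sum_y |f(y)|^p\,\pi(\{y\})=\|f\|_p^p$, so $\|T_{\tilde S_i}\|\le1$. The finiteness of that same double sum also shows $T_{\tilde S_i}f(x)$ converges absolutely for every $x$, so the operator is genuinely defined. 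For the adjoint relation at $p=2$ I would expand $\langle T_{\tilde S_i}f,g\rangle=\sum_{x,y}\pi(\{x\})\tilde S_i(x,y)f(y)\overline{g(x)}$, substitute the symmetry $\pi(\{x\})\tilde S_i(x,y)=\pi(\{y\})\tilde S_{\bar i}(y,x)$ from \ref{def-gen-discrete-asso}(5), and regroup the sum into $\langle f,T_{\tilde S_{\bar i}}g\rangle$, giving $T_{\tilde S_{\bar i}}=T_{\tilde S_i}^*$.

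The only genuinely delicate point is the convergence bookkeeping in part (4): one must justify the interchange of summations. For the norm estimate this is free from Tonelli since all terms are nonnegative; for the adjoint identity I would dominate $\sum_{x,y}\pi(\{x\})\tilde S_i(x,y)|f(y)||g(x)|$ by Cauchy--Schwarz with respect to the measure $\pi(\{x\})\tilde S_i(x,y)$ on $X\times X$, whose two marginals equal $\|f\|_2^2$ and $\|g\|_2^2$ by invariance and row-stochasticity respectively, so that the double sum is finite and Fubini applies. Everything else is a direct reading of the axioms.
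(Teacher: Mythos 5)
Your proposal is correct and follows essentially the same route as the paper, which simply declares parts (1) and (3) and the adjoint relation "clear"/"obvious" and sketches (2) and the $L^p$ contraction in one line; you have merely filled in the routine verifications (the support argument for the involution via axioms (2) and (5), the collapse of $\sum_l\tilde p_{i,j}^l\tilde S_l(x,y)$ to the single term $l=k$ on $R_k$, and the Jensen/Tonelli and Cauchy--Schwarz bookkeeping, where the paper invokes H\"older to the same effect). No gaps.
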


\begin{proof} Part (1) is clear; we only remark that the axiom regarding the
  involution on $D$ follows from axioms (2) and (5) above.

For part (2) we use axiom (5) which implies for $i\in D$, $y\in X$ that
$$\textstyle
\sum_{x\in X} \pi(\{x\}) S_i(x,y)=\pi(\{x\})\sum_{x\in X} S_i(y,x)=\pi(\{y\}).$$
Part (3) and the adjoint relation in (4) are obvious. Moreover,
for $p\ge1$, the invariance of $\pi$ w.r.t.~$S_i$ and H\"older's inequality 
easily imply
$\|T_{\tilde S_i}\|\le1$.
\end{proof}

In summary, we may regard generalized association schemes as deformations
of given classical association schemes with deformed intersection numbers 
$\tilde p_{i,j}^k$.

Generalized association schemes also lead to discrete
hypergroups:

\begin{proposition}\label{prp-gen-asso-to-hypergroup}
 Let $\Lambda:=(X, D, (R_i)_{i\in D}, (\tilde S_i)_{i\in D})$ be a generalized association
  scheme with deformed intersection numbers $\tilde p_{i,j}^k$.
Then the product $\tilde*$ with
$$\textstyle\delta_i\tilde*\delta_j:=\sum_{k\in D}
 \tilde p_{i,j}^k \delta_k$$
can be extended uniquely to an associative, bilinear, 
$\|\>.\>\|_{TV}$-continuous mapping on $M_b(D)$.
 $(D,\tilde*)$  is a discrete hypergroup with the left and right Haar measure
$$\textstyle\Omega_l:=\sum_{i\in D} \omega_i \delta_i
\quad\text{and}\quad\Omega_r:=
\sum_{i\in D} \omega_{\bar i} \delta_i \quad\text{with}\quad
\omega_i:=\frac{1}{\tilde p_{i,\bar i}^e}>0  $$
 respectively. This  hypergroup is commutative or  
 symmetric  if and only if so is $\Lambda$.
\end{proposition}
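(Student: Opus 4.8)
The plan is to follow the proof of Proposition \ref{prp-asso-to-hypergroup} line by line, replacing the intersection numbers $p_{i,j}^k$ and the relations of Lemma \ref{multass} by the deformed numbers $\tilde p_{i,j}^k$ and the structure relation $\tilde S_i\tilde S_j=\sum_k\tilde p_{i,j}^k\tilde S_k$ of axiom (3) in Definition \ref{def-gen-discrete-asso}. The one ingredient of Section 3 that is no longer available is the identity $A_{\bar i}=A_i^T$; its role will be played by the $\pi$-reversibility of axiom (5). Throughout I would use the following basic observation, which legitimises every ``comparison of coefficients'': by axiom (2) the matrices $\tilde S_i$ are supported exactly on the pairwise disjoint cells $R_i$, hence are linearly independent, and for fixed $i,j$ only finitely many $\tilde p_{i,j}^k$ are nonzero (the support of $\delta_i\tilde*\delta_j$ equals that of $\delta_i*\delta_j$ by Lemma \ref{basic-gen}(3), which is finite by Lemma \ref{multass}(5) applied to the association scheme of Lemma \ref{basic-gen}(1)). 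Thus every locally finite expansion in the $\tilde S_k$ has unique coefficients.

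Given this, the elementary hypergroup axioms are routine. By Lemma \ref{basic-gen}(3), $\tilde p_{i,j}^k\ge 0$ and $\sum_k\tilde p_{i,j}^k=1$, so $\delta_i\tilde*\delta_j$ is a probability measure of finite support, and the unique bilinear, $\|\cdot\|_{TV}$-continuous extension to $M_b(D)$ works exactly as in Proposition \ref{prp-asso-to-hypergroup}; the topological requirements of Definition 2.1(1) are vacuous because $D$ is discrete. Associativity of $\tilde*$ on point measures is the identity $\sum_l\tilde p_{i,j}^l\tilde p_{l,k}^m=\sum_l\tilde p_{j,k}^l\tilde p_{i,l}^m$, obtained by comparing coefficients in $(\tilde S_i\tilde S_j)\tilde S_k=\tilde S_i(\tilde S_j\tilde S_k)$. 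Axiom (4) of Definition \ref{def-gen-discrete-asso} gives $\tilde S_e=I_X$, whence $\tilde S_i\tilde S_e=\tilde S_i=\tilde S_e\tilde S_i$ forces $\tilde p_{i,e}^k=\tilde p_{e,i}^k=\delta_{i,k}$, i.e. $e$ is a two-sided neutral element.

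The heart of the argument is the involution relation $\tilde p_{i,j}^k=\tilde p_{\bar j,\bar i}^{\bar k}$. I would rewrite axiom (5) in matrix form: with the diagonal matrix $D_\pi$ having entries $\pi(\{x\})$, reversibility reads $\tilde S_{\bar i}=D_\pi^{-1}\tilde S_i^{\,T}D_\pi$, the exact analogue of $A_{\bar i}=A_i^T$. Conjugating and transposing the structure relation then gives $\tilde S_{\bar j}\tilde S_{\bar i}=D_\pi^{-1}(\tilde S_i\tilde S_j)^T D_\pi=\sum_k\tilde p_{i,j}^k\tilde S_{\bar k}$, and comparing this with $\tilde S_{\bar j}\tilde S_{\bar i}=\sum_m\tilde p_{\bar j,\bar i}^m\tilde S_m$ (after reindexing $m=\bar k$) yields $\tilde p_{i,j}^k=\tilde p_{\bar j,\bar i}^{\bar k}$. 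This is precisely the hypergroup axiom $(\delta_i\tilde*\delta_j)^-=\delta_{\bar j}\tilde*\delta_{\bar i}$. For the remaining axiom I note $e\in\text{supp}(\delta_i\tilde*\delta_j)\iff\tilde p_{i,j}^e>0\iff p_{i,j}^e>0\iff j=\bar i$, using Lemma \ref{basic-gen}(3) and $p_{i,j}^e=\omega_i\delta_{i,\bar j}$ from Lemma \ref{multass}(1), the two involutions agreeing by Lemma \ref{basic-gen}(1). Commutativity and symmetry transfer in both directions by comparison of coefficients in $\tilde S_i\tilde S_j=\tilde S_j\tilde S_i$ and by uniqueness of the involution. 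Finally, $(D,\tilde*)$ being a discrete hypergroup, Examples \ref{example-haar}(1) give the two Haar weights at $i$ as $1/(\delta_{\bar i}\tilde*\delta_i)(\{e\})=1/\tilde p_{\bar i,i}^e$ and $1/(\delta_i\tilde*\delta_{\bar i})(\{e\})=1/\tilde p_{i,\bar i}^e=\omega_i$, both positive since $\tilde p_{i,\bar i}^e>0\Leftrightarrow p_{i,\bar i}^e>0$ by Lemma \ref{basic-gen}(3) and $p_{i,\bar i}^e>0$ by Lemma \ref{multass}(1); these are the families $\{\omega_i\}$ and $\{\omega_{\bar i}\}$ appearing in $\Omega_l$ and $\Omega_r$ (consistently with $\Omega_r=\Omega_l^-$).

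I expect the only genuinely new point to be the involution relation of the third paragraph: in the classical case it is immediate from transposition of $\{0,1\}$-matrices, whereas here the $\tilde S_i$ are neither symmetric nor permuted into one another by $.^T$, and one must route through the reversing measure $\pi$ via $\tilde S_{\bar i}=D_\pi^{-1}\tilde S_i^{\,T}D_\pi$. A secondary bookkeeping subtlety is the left/right labelling of the Haar measures: since $T_{\tilde S_i}T_{\tilde S_{\bar i}}$ and $T_{\tilde S_{\bar i}}T_{\tilde S_i}$ need not have equal diagonal entries, in general $\tilde p_{i,\bar i}^e\neq\tilde p_{\bar i,i}^e$, so the non-unimodular case must be tracked carefully through the two distinct weights $\omega_i=1/\tilde p_{i,\bar i}^e$ and $\omega_{\bar i}=1/\tilde p_{\bar i,i}^e$.
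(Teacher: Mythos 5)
Your argument is correct and follows the same route as the paper's own proof, which is however much terser: the paper notes that associativity comes from associativity of matrix multiplication, that $\delta_i\tilde*\delta_j$ is a probability measure with the same support as the classical convolution $\delta_i*\delta_j$ by Lemma \ref{basic-gen}(3), asserts that this ``readily shows'' the hypergroup axioms, and refers to Example \ref{example-haar}(1) for the Haar measures. The one step the paper leaves entirely implicit is exactly the one you isolate as genuinely new: the relation $\tilde p_{i,j}^k=\tilde p_{\bar j,\bar i}^{\bar k}$ needed for axiom (4) of a hypergroup, which you derive correctly from the matrix form $\tilde S_{\bar i}=D_\pi^{-1}\tilde S_i^{\,T}D_\pi$ of axiom (5); this is the right substitute for $A_{\bar i}=A_i^T$ and a worthwhile addition, as are your justifications of linear independence of the $\tilde S_i$ and of the local finiteness of the expansions. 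Your closing bookkeeping remark is also well taken: with the convention $\omega_l(\{x\})=1/(\delta_{\bar x}*\delta_x)(\{e\})$ of Example \ref{example-haar}(1), the weight $\omega_i=1/\tilde p_{i,\bar i}^e$ defined in the statement is the \emph{right} Haar weight at $i$ and $\omega_{\bar i}=1/\tilde p_{\bar i,i}^e$ the left one, so the roles of $\Omega_l$ and $\Omega_r$ in the statement appear to be interchanged relative to Example \ref{example-haar}(1) and to the computation in the proof of Proposition \ref{prp-asso-to-hypergroup}; this is harmless (both measures are exhibited) and invisible in the commutative or symmetric case, but worth recording for non-unimodular generalized schemes.
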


\begin{proof} As the product of matrices is associative,  the
  convolution $\tilde*$ is associative.
 Moreover, by Lemma \ref{basic-gen}(3), $\delta_i\tilde*\delta_j$ is a
probability measure for $i,j\in D$ whose support
is the same as for the
hypergroup convolution of the association scheme $(X, D, (R_i)_{i\in D})$
in Proposition \ref{prp-asso-to-hypergroup}. This readily shows that 
$(D,\tilde*)$
is a hypergroup.

Clearly, $\tilde*$ is commutative or symmetric if and only if so is 
 $\Lambda$. Finally, the statement about the Haar measures follows from
Example \ref{example-haar}(1).
\end{proof}

We now  extend the approach of Section 4 to dual positive product
formulas for discrete commutative hypergroups $(D,\tilde*)$ which are associated with 
generalized commutative association schemes
 $\Lambda:=(X, D, (R_i)_{i\in D}, (\tilde S_i)_{i\in D})$.

For this we study the linear span
$A(X)$ 
of the matrices $\tilde S_i$ ($i\in D$). If we identify the $\tilde S_i$
with the transition operators $T_{\tilde  S_i}$, we may regard $A(X)$ as a 
commutative $*$-subalgebra of the $C^*$-algebra 
${\mathcal B}(L^2(X,\pi))$     of all bounded linear operators on $L^2(X,\pi)$
by Lemma \ref{basic-gen}(4). 
 As in Section 4, 
$A(X)$  is 
isomorphic with the  $*$-algebra $(M_f(D),\tilde*)$ of all measures with finite
support via
$$\textstyle \mu\in M_f(D) 
\longleftrightarrow \tilde S_\mu:=\sum_{i\in D} \mu(\{i\}) \tilde S_i\in A(X).
$$
Moreover, using the Haar measure $\Omega$ of $(D,\tilde*)$, 
 $A(X)$ is also isomorphic with the commutative  $*$-algebra
$(C_c(D),\tilde*)$ via
$$\textstyle f\in C_c(D)  \longleftrightarrow \tilde S_f:=\sum_{i\in D} f(i)\omega_i
\tilde 
S_i\in A(X).$$
As in Section 4, we have $\tilde S_f=\tilde S_{f\Omega}$ for $ f\in C_c(D)$. 

We even may define the matrices 
$\tilde S_f:=\sum_{i\in D} f(i)\omega_i\tilde S_i$ 
for $f\in C_b(D)$, and we may form matrix products of
$\tilde S_f$ with matrices with only finitely many nonzero entries 
in all rows and
columns.
In particular, we  
may multiply $\tilde S_f$ on both sides
with any $\tilde S_\mu$, $\mu\in M_f(D)$.
We need the following notion of positive definiteness:

\begin{definition}\label{pi-pd}
Let $A\in\mathbb C^{X\times X}$ be a matrix. Then for all $g_1,g_2\in
C_f(X):=\{g:X\to\mathbb C \quad\text{with finite support}\}$, we may form
$$\textstyle \langle Ag_1,g_2\rangle_\pi:= \sum_{x\in X} (Ag_1)(x)\cdot
\overline{g_2(x)}
\cdot \pi(\{x\})\in \mathbb C.$$
We say that $A$ is $\pi$-positive definite, if $\langle Ag,g\rangle_\pi\ge0$
for all $g\in C_f(X)$. This is obviously equivalent to the fact that the
matrix $(\pi(\{x\})\cdot A_{x,y})_{x,y\in X}$ is positive semidefinite in the
usual way.
\end{definition}

For each $f\in C_b(D)$ we define the function 
$F_f:X\times X\to\mathbb C$ with $F_f(x,y)=f(i)$ for the unique $i\in D$ with
$(x,y)\in R_i$ as in Section 4. Then  different from Section 4,
we usually have 
 $(F_f(x,y))_{x,y\in X} \ne\tilde  S_f$. We thus have to state 
some results from Section 4 for the matrices $\tilde  S_f$ instead of
 $(F_f(x,y))_{x,y\in X}$.

Lemmas \ref{pd-1} and \ref{pd-2} now read as follows:

\begin{lemma}\label{pd-1g}
Let  $f\in C_b(D)$. If $\tilde  S_f$ is $\pi$-positive definite, 
then $f$ is positive definite on the hypergroup $(D,\tilde*)$.
\end{lemma}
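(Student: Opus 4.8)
The plan is to follow the proof of Lemma \ref{pd-1} almost verbatim, making only two substitutions: the transpose--conjugate $\overline{S_\mu}^T$ is everywhere replaced by the adjoint of $\tilde S_\mu$ in $L^2(X,\pi)$, and the ordinary positive definiteness of the kernel $F_f$ is replaced by the $\pi$-positive definiteness of $\tilde S_f$ that we have assumed. The essential point is that the kernel $F_f$ never enters the argument; only the matrix $\tilde S_f$ does, and this is precisely the object whose positivity we control. This is why the identity $F_f=\tilde S_f$ of Section 4, which fails here, is not needed.

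First I would put $f^-(x):=f(\bar x)$ and note, exactly as in Lemma \ref{pd-1} and using only the hypergroup axioms, that $f$ is positive definite on $(D,\tilde*)$ if and only if $f^-$ is. It thus suffices to show that $P:=\sum_{k,l=1}^n c_k\bar c_l\, f^-(x_k\tilde*\bar x_l)\ge 0$ for arbitrary $x_1,\dots,x_n\in D$ and $c_1,\dots,c_n\in\mathbb C$. Setting $\mu:=\sum_{k=1}^n c_k\delta_{x_k}\in M_f(D)$, I would rewrite, by the definition of the convolution of a function with a measure and by commutativity of $\tilde*$,
$$P=\int_D f^-\,d(\mu\tilde*\mu^*)=(\mu\tilde*\mu^*\tilde* f)(e)=(\mu\tilde* f\tilde*\mu^*)(e),$$
exactly as in the proof of Lemma \ref{pd-1}.

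Next I would pass to the operator algebra $A(X)\subset\mathcal B(L^2(X,\pi))$. Since $\tilde S_f=\tilde S_{f\Omega}$ and the map $g\mapsto g\Omega$ intertwines the function and measure convolutions, the value $P=(\mu\tilde* f\tilde*\mu^*)(e)$ is the common diagonal entry of the matrix $\tilde S_{\mu\tilde* f\tilde*\mu^*}=\tilde S_\mu\,\tilde S_f\,\tilde S_{\mu^*}$ (all diagonal entries of a matrix in $A(X)$ agree, being the coefficient of $\tilde S_e=I_X$), these products being legitimate because $\tilde S_\mu$ and $\tilde S_{\mu^*}$ have only finitely many nonzero entries in each row and column. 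By Lemma \ref{basic-gen}(4) the involution $\mu\mapsto\mu^*$ corresponds to the adjoint in $L^2(X,\pi)$, so as operators
$$\tilde S_\mu\,\tilde S_f\,\tilde S_{\mu^*}=T_{\tilde S_\mu}\,T_{\tilde S_f}\,T_{\tilde S_\mu}^*.$$
Because $T_{\tilde S_f}$ is $\pi$-positive by hypothesis, $\langle T_{\tilde S_\mu}T_{\tilde S_f}T_{\tilde S_\mu}^*g,g\rangle_\pi=\langle T_{\tilde S_f}(T_{\tilde S_\mu}^*g),T_{\tilde S_\mu}^*g\rangle_\pi\ge0$ for every $g\in C_f(X)$, so this operator is $\pi$-positive definite. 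By Definition \ref{pi-pd} the matrix $(\pi(\{x\})\,(\tilde S_\mu\tilde S_f\tilde S_{\mu^*})(x,y))_{x,y}$ is then positive semidefinite, whence its diagonal entries $\pi(\{x\})\,(\tilde S_\mu\tilde S_f\tilde S_{\mu^*})(x,x)$ are nonnegative; since $\pi(\{x\})>0$ this forces $P\ge0$, and hence $f$ is positive definite.

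The one place demanding genuine care, and the main obstacle, is the bookkeeping of the two involutions. In Section 4 the matrix involution was the plain transpose--conjugate, whereas here it is the $\pi$-weighted adjoint, and it is exactly axiom (5) of Definition \ref{def-gen-discrete-asso} (equivalently Lemma \ref{basic-gen}(4)) that makes $\mu\mapsto\tilde S_\mu$ a $*$-homomorphism for this adjoint. One must also verify that $\tilde S_{\mu\tilde* f\tilde*\mu^*}$ really equals the matrix product $\tilde S_\mu\tilde S_f\tilde S_{\mu^*}$, which is the algebra isomorphism recalled before the lemma, extended from $M_f(D)$ to products involving the function $f\in C_b(D)$ via $\tilde S_f=\tilde S_{f\Omega}$. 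Once these compatibilities are recorded, the argument is purely formal.
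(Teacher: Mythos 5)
Your proposal is correct and follows essentially the same route as the paper's own proof: reduce to $f^-$, write $P=(\mu\tilde*f\tilde*\mu^*)(e)$ as the common diagonal entry of $\tilde S_\mu\tilde S_f\tilde S_\mu^{\,*}$, and use the $\pi$-positivity of $\tilde S_f$ together with the $\pi$-adjoint relation to conclude the diagonal entries are nonnegative. The extra bookkeeping you supply (that $\mu\mapsto\tilde S_\mu$ is a $*$-homomorphism for the $\pi$-weighted adjoint via axiom (5), and that the diagonal entry equals the coefficient of $\tilde S_e$) is exactly what the paper leaves implicit.
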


\begin{proof} For  $f$ define  $f^-(x) :=f(\bar x)$ for
  $x\in D$.
Let $n\in\mathbb N$, $x_1,\ldots, x_n\in D$ and
  $c_1,\ldots,c_n\in\mathbb C$. Let $\mu:=\sum_{k=1}^n c_k\delta_{x_k}\in M_f(D)$
  and observe that as in the proof of Lemma \ref{pd-1},
$P:=\sum_{k,l=1}^n c_k \bar c_l \cdot f^-(x_k\tilde*\bar x_l)= 
(\mu\tilde*f\tilde*\mu^*)(e)$
where  this is equal to the diagonal entries of 
$$\tilde S_{\mu\tilde*f\tilde*\mu^*}=\tilde S_\mu \cdot\tilde S_{f}\cdot\tilde{S_\mu}^*.$$
This  matrix product exists and is $\pi$-positive semidefinite.
Therefore, by \ref{pi-pd}, the diagonal entries of this matrix are nonnegative
and thus $P\ge 0$.
Hence,  $f^-$ and thus $f$ is positive definite. 
\end{proof}

\begin{lemma}\label{pd-2g}
Let  $f\in C_c(D)$. Then, by \ref{facts-hy}(2),  
$f\tilde*f^*$ is positive definite on $(D,\tilde*)$, and
 $S_{f\tilde*f^*}$ is $\pi$-positive definite.
\end{lemma}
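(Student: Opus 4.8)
The first assertion is just an instance of Fact \ref{facts-hy}(2), so I would dispose of it by citation. For the second assertion the plan is to imitate the proof of Lemma \ref{pd-2}, but with two changes forced by the generalized setting. Since now $(F_{f\tilde*f^*}(x,y))_{x,y\in X}$ need not coincide with $\tilde S_{f\tilde*f^*}$, I would argue directly with the matrix $\tilde S_{f\tilde*f^*}$, and I would use the $\pi$-weighted positivity of Definition \ref{pi-pd} in place of ordinary positive definiteness. The whole argument then reduces to exhibiting $\tilde S_{f\tilde*f^*}$ as an operator of the form $A\,A^*$ on $L^2(X,\pi)$.

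First I would invoke the $*$-algebra isomorphism $f\longleftrightarrow\tilde S_f$ between $(C_c(D),\tilde*)$ and $A(X)$ to write $\tilde S_{f\tilde*f^*}=\tilde S_f\,\tilde S_{f^*}$, and then identify $\tilde S_{f^*}$ with the Hilbert-space adjoint $(\tilde S_f)^*$ computed in $L^2(X,\pi)$. This identification is the single point requiring care: expanding $(\tilde S_f)^*=\sum_{i\in D}\overline{f(i)}\,\omega_i\,(\tilde S_i)^*$ and substituting the adjoint relation $(\tilde S_i)^*=\tilde S_{\bar i}$ of Lemma \ref{basic-gen}(4), a reindexing $j=\bar i$ turns this into $\sum_{j\in D}\overline{f(\bar j)}\,\omega_{\bar j}\,\tilde S_j$, whereas $\tilde S_{f^*}=\sum_{j\in D}\overline{f(\bar j)}\,\omega_j\,\tilde S_j$. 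These agree because the commutative scheme satisfies $\tilde p_{j,\bar j}^e=\tilde p_{\bar j,j}^e$, whence $\omega_j=\omega_{\bar j}$ for all $j\in D$ via the formula $\omega_j=1/\tilde p_{j,\bar j}^e$ of Proposition \ref{prp-gen-asso-to-hypergroup}. Since $f\in C_c(D)$, both $\tilde S_f$ and $(\tilde S_f)^*$ have only finitely many nonzero entries in each row and column, so all the matrix products involved are legitimate and preserve $C_f(X)$.

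Having written $\tilde S_{f\tilde*f^*}=\tilde S_f\,(\tilde S_f)^*$, I would finish by the defining property of $\pi$-positivity: for every $g\in C_f(X)$,
$$
\langle \tilde S_{f\tilde*f^*}\,g,\,g\rangle_\pi
=\langle \tilde S_f(\tilde S_f)^*g,\,g\rangle_\pi
=\langle (\tilde S_f)^*g,\,(\tilde S_f)^*g\rangle_\pi
=\bigl\|(\tilde S_f)^*g\bigr\|_\pi^2\ge0 ,
$$
so that $\tilde S_{f\tilde*f^*}$ is $\pi$-positive definite in the sense of Definition \ref{pi-pd}.

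I do not expect a genuine obstacle here: the statement is essentially formal once the bookkeeping is in place. The one delicate spot, as noted, is the identity $\tilde S_{f^*}=(\tilde S_f)^*$, where both the $\pi$-weighting in the adjoint relation of Lemma \ref{basic-gen}(4) and the unimodularity forced by commutativity are genuinely used; the notion of $\pi$-positive definiteness in Definition \ref{pi-pd} is precisely tailored so that an operator composed with its $L^2(X,\pi)$-adjoint comes out positive, which is exactly what makes the closing display work.
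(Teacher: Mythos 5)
Your proof is correct and follows essentially the same route as the paper, whose entire argument is the identity $\tilde S_{f\tilde*f^*}=\tilde S_f(\tilde S_f)^*$ together with the observation that such an operator is $\pi$-positive definite. The one detail you elaborate --- that $\tilde S_{f^*}=(\tilde S_f)^*$ requires $\omega_j=\omega_{\bar j}$, which indeed holds here since commutativity gives $\tilde p_{j,\bar j}^e=\tilde p_{\bar j,j}^e$ --- is left implicit in the paper's appeal to the $*$-algebra isomorphism, and your verification of it is sound.
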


\begin{proof} By our considerations above,
$S_{f\tilde*f^*}=\tilde S_f\tilde{S_f}^*$. This proves the claim.
\end{proof}

We now combine Lemma \ref{pd-2g} with \ref{facts-hy}(4) and conclude as
in Corollary \ref{pd3}:

\begin{corollary}\label{pd3g}
Let $\alpha\in ( D,\tilde*)^\wedge$ be a character in the support 
 of the Plancherel measure. Then $\tilde S_\alpha$ 
is $\pi$-positive definite.
\end{corollary}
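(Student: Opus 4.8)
The plan is to follow the proof of Corollary \ref{pd3} almost verbatim, substituting the operator matrix $\tilde S_\alpha$ for the kernel $F_\alpha$ and $\pi$-positive definiteness for ordinary positive definiteness. First I would invoke Fact \ref{facts-hy}(4): since $\alpha$ lies in the support $S$ of the Plancherel measure of $(D,\tilde*)$, it is a compact-uniform limit of positive definite functions $h_\lambda\tilde*h_\lambda^*$ with $h_\lambda\in C_c(D)$. As $D$ is discrete, every compact set is finite, so compact-uniform convergence coincides with pointwise convergence; thus $(h_\lambda\tilde*h_\lambda^*)(i)\to\alpha(i)$ for every $i\in D$. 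Note that $\tilde S_\alpha$ is well defined because $\alpha\in C_b(D)$, as remarked before Definition \ref{pi-pd}.

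By Lemma \ref{pd-2g}, each matrix $\tilde S_{h_\lambda\tilde*h_\lambda^*}=\tilde S_{h_\lambda}\tilde{S_{h_\lambda}}^*$ is $\pi$-positive definite. It remains to show that this property passes to the limit $\tilde S_\alpha$. To do so I would fix $g\in C_f(X)$ and study the form $\langle\tilde S_f g,g\rangle_\pi=\sum_{x,y\in\mathrm{supp}\,g}\pi(\{x\})\,(\tilde S_f)_{x,y}\,g(y)\,\overline{g(x)}$, which is a finite sum for every $f\in C_b(D)$.

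The decisive point is that for each pair $(x,y)$ there is, by the partition hypothesis together with axiom (2) of Definition \ref{def-gen-discrete-asso}, a unique index $i=i(x,y)\in D$ with $(x,y)\in R_i$, whence $(\tilde S_f)_{x,y}=f(i)\,\omega_i\,(\tilde S_i)_{x,y}$, all other summands of $\tilde S_f=\sum_k f(k)\omega_k\tilde S_k$ vanishing at $(x,y)$. Hence $\langle\tilde S_f g,g\rangle_\pi$ is a fixed linear combination of the finitely many numbers $f(i(x,y))$ with $x,y\in\mathrm{supp}\,g$, and so depends continuously on $f$ in the topology of pointwise convergence. Taking $f=h_\lambda\tilde*h_\lambda^*\to\alpha$ yields $\langle\tilde S_{h_\lambda\tilde*h_\lambda^*}g,g\rangle_\pi\to\langle\tilde S_\alpha g,g\rangle_\pi$; since each term on the left is nonnegative, the limit is nonnegative as well. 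As $g\in C_f(X)$ was arbitrary, $\tilde S_\alpha$ is $\pi$-positive definite.

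I expect no serious obstacle here: the argument is entirely parallel to Corollary \ref{pd3}, and the replacement of $F_\alpha$ by $\tilde S_\alpha$ is harmless once one restricts to finitely supported test vectors. The only mildly delicate step is the limit passage, and it is tamed by the partition structure, which guarantees that a finitely supported $g$ probes only finitely many matrix entries, each governed by a single value of $f$.
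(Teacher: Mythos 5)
Your proposal is correct and follows exactly the paper's route: the paper's proof of this corollary is the one-line instruction to combine Lemma \ref{pd-2g} with Fact \ref{facts-hy}(4) and conclude as in Corollary \ref{pd3}, which is precisely what you do, merely spelling out the limit passage (correctly, via the observation that the partition structure makes $\langle\tilde S_f g,g\rangle_\pi$ a fixed finite linear combination of values of $f$).
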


We now want to combine Corollary  \ref{pd3g} with
 Lemmas \ref{prod-pos} and \ref{pd-1g} to derive an extension of 
Theorem \ref{main}. Here, however  we would need 
 $(F_f(x,y))_{x,y\in X} =\tilde  S_f$. We can overcome this problem with
some additional condition which relates the characters of 
$(D,\tilde*)$ with the characters of the commutative hypergroup
$(D,*)$ which is associated with the association scheme 
$(X,D,(R_i)_{i\in D})$:

\begin{definition}
Let $\Lambda:=(X, D, (R_i)_{i\in D}, (\tilde S_i)_{i\in D})$ 
be a generalized commutative association scheme with the associated 
hypergroups $(D,\tilde*)$ and $(D,*)$ as  above.
 We say that  $\alpha\in 
(D,\tilde*)^\wedge$ has the positive
connection property if $\alpha$ is positive definite on  $(D,*)$, and if the
associated kernel $F_\alpha:X\times X\to\mathbb C$ is positive semidefinite.

We say that  $\Lambda$ has the positive
connection property, if all $\alpha\in 
(D,\tilde*)^\wedge$ have the positive
connection property.
\end{definition}

To understand this condition, consider a finite
 generalized commutative association scheme $\Lambda$.
If a character $\alpha\in (D,\tilde*)^\wedge$ 
can be written as a nonnegative linear combination of the characters of
 $(D,*)$, then $\alpha$ is positive definite on  $(D,*)$, and by 
Corollary \ref{pd3}, the associated kernel $F_\alpha$ is positive semidefinite.

In this way, the positive connection property of $\Lambda$ 
roughly means that all
 $\alpha\in (D,\tilde*)^\wedge$ admit  nonnegative integral representations
w.r.t.~the characters of  $(D,*)$. Such positive integral representations
are well known for many families of special functions and often easier to prove
than positive product formulas.

We now use the positive
connection property and obtain the following extension of Theorem \ref{main}:

\begin{theorem}\label{main-g}
 Let $(D,\tilde*)$ be a commutative discrete hypergroup which
corresponds to
 the generalized  association scheme 
$\Lambda:=(X,D, (R_i)_{i\in D},(\tilde  S_i)_{i\in D})$.
Let $\alpha,\beta\in (D,\tilde*)^\wedge$ be  characters 
such that $\alpha$ is  in the  support $S$
 of the Plancherel measure of $(D,\tilde*)$ and 
 $\beta$  has the positive
connection property. 
 Then $\alpha\cdot\beta$ 
is  positive definite  on $(D,\tilde*)$, and there is
a unique probability 
measure $\delta_\alpha\hat{\tilde*}\delta_\beta\in M^1((D,\tilde*)^\wedge)$ with 
$(\delta_\alpha\hat{\tilde*}\delta_\beta)^\vee=\alpha\cdot\beta$.
The support of
this measure is contained in $S$.
\end{theorem}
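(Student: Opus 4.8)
My plan is to reduce the whole statement to the single claim that the matrix $\tilde S_{\alpha\beta}$ is $\pi$-positive definite, after which the tools already built in this section apply mechanically. The starting point is the entrywise description of $\tilde S_f$. By the partition axiom \ref{def-gen-discrete-asso}(2), for $f\in C_b(D)$ and $(x,y)\in R_i$ only the index $i$ contributes to $\tilde S_f=\sum_{j}f(j)\omega_j\tilde S_j$, so that $(\tilde S_f)_{x,y}=f(i)\omega_i(\tilde S_i)_{x,y}=F_f(x,y)\,W_{x,y}$, where $W:=\tilde S_{\bf 1}$ (with ${\bf 1}$ the constant function $1$) is the fixed weight matrix $W_{x,y}=\omega_i(\tilde S_i)_{x,y}$ for $(x,y)\in R_i$. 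Since $F_{\alpha\beta}=F_\alpha F_\beta$ (both factors being constant on each $R_i$), this yields the clean identity $(\tilde S_{\alpha\beta})_{x,y}=F_\beta(x,y)\,(\tilde S_\alpha)_{x,y}$, which is precisely what couples the two hypotheses on $\alpha$ and $\beta$.

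Next I would multiply this identity by $\pi(\{x\})$ and read the result as a Hadamard (entrywise) product of two kernels on $X\times X$:
$$\pi(\{x\})(\tilde S_{\alpha\beta})_{x,y}=F_\beta(x,y)\cdot\bigl(\pi(\{x\})(\tilde S_\alpha)_{x,y}\bigr).$$
The kernel $(F_\beta(x,y))_{x,y}$ is positive semidefinite because $\beta$ has the positive connection property, and the kernel $(\pi(\{x\})(\tilde S_\alpha)_{x,y})_{x,y}$ is positive semidefinite because $\alpha\in S$ makes $\tilde S_\alpha$ $\pi$-positive definite by Corollary \ref{pd3g} together with Definition \ref{pi-pd}. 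By the Schur product theorem (Lemma \ref{prod-pos}) their product is positive semidefinite, i.e.\ $\tilde S_{\alpha\beta}$ is $\pi$-positive definite. Lemma \ref{pd-1g}, applied to $f=\alpha\beta\in C_b(D)$, then shows that $\alpha\cdot\beta$ is positive definite on $(D,\tilde*)$; Bochner's theorem \ref{facts-hy}(1) produces the unique $\mu=\delta_\alpha\hat{\tilde*}\delta_\beta\in M_b^+((D,\tilde*)^\wedge)$ with $\mu^\vee=\alpha\cdot\beta$, and since $(\alpha\beta)(e)=\alpha(e)\beta(e)=1$, the same result forces $\mu$ to be a probability measure.

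The main obstacle is the last assertion $\mathrm{supp}\,\mu\subseteq S$. In Theorem \ref{main} this was extracted from Theorem 2.1(4) of \cite{V3}, but there both characters lie in $S$; here $\beta$ only has the positive connection property and may lie outside $S$, so the symmetric reasoning is unavailable and this is where I expect the real work. My plan is to verify the hypothesis of \ref{facts-hy}(3), namely to realise $\alpha\cdot\beta$ as a compact-uniform limit of functions $g\tilde*g^*$ with $g\in C_c(D)$. Since $\alpha\in S$, \ref{facts-hy}(4) writes $\alpha=\lim_n h_n\tilde*h_n^*$ compact-uniformly with $h_n\in C_c(D)$, and boundedness of $\beta$ gives $\alpha\beta=\lim_n(h_n\tilde*h_n^*)\beta$; at the matrix level each $\pi(\{x\})(\tilde S_{(h_n\tilde*h_n^*)\beta})_{x,y}$ again factors as the Hadamard product of the positive semidefinite $F_\beta$ with $\tilde S_{h_n}\tilde S_{h_n}^*$ from Lemma \ref{pd-2g}. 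The delicate point is to convert this factorisation back into an honest square $g_n\tilde*g_n^*$ on the hypergroup side despite the failure of $F_g=\tilde S_g$ in the generalized setting; alternatively, if the cited result of \cite{V3} in fact requires only one factor to lie in $S$, the support statement follows immediately exactly as in Theorem \ref{main}.
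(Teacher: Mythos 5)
Your argument is essentially the paper's own proof: the identity $(\tilde S_{\alpha\beta})_{x,y}=F_\beta(x,y)\,(\tilde S_\alpha)_{x,y}$, the Hadamard product of the $\pi$-weighted kernel of $\tilde S_\alpha$ (positive semidefinite by Corollary \ref{pd3g}) with the kernel $F_\beta$ (positive semidefinite by the positive connection property) via Lemma \ref{prod-pos}, and the passage through Lemma \ref{pd-1g} and Bochner's theorem are exactly the steps the paper takes. For the support assertion your fallback is the correct resolution: Theorem 2.1(4) of \cite{V3} requires only one of the two factors to be the inverse transform of a positive measure supported in $S$ (here $\delta_\alpha$, the other factor being $\check{\delta_\beta}$ with $\delta_\beta\in M^1((D,\tilde*)^\wedge)$), so the paper simply cites it as in Theorem \ref{main}, and your more elaborate limit argument via \ref{facts-hy}(3) is not needed.
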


\begin{proof}
By Corollary \ref{pd3g}, $\tilde S_\alpha$ is $\pi$-positive definite, i.e.,
 $ (\pi(\{x\})(\tilde S_\alpha)_{x,y})_{x,y\in X}$ is a positive
semidefinite matrix. Furthermore, as $\beta$ has the positive connection property, 
$(F_\beta(x,y))_{x,y\in X}$ is also positive
semidefinite. We conclude from Lemma
\ref{prod-pos} that the pointwise product 
$ (\pi(\{x\})(\tilde S_\alpha)_{x,y}F_\beta(x,y))_{x,y\in X}$ is also 
positive semidefinite. On the other hand, for all $x,y\in X$ and $i\in D$ with
$(x,y)\in R_i$, we have
$$(\tilde S_\alpha)_{x,y}F_\beta(x,y) =\beta(i)\alpha(i)\omega_i (\tilde S_i)_{x,y}
=(\tilde S_{\alpha\beta})_{x,y}.$$
Therefore, $\tilde S_{\alpha\beta}$ is $\pi$-positive definite, and, by Lemma
\ref{pd-1g}, $\alpha\beta$ is positive definite on $(D,\tilde*)$.
As in the proof of Theorem \ref{main}, Bochner's theorem 
 \ref{facts-hy}(1) together with Theorem 2.1(4) of \cite{V3}
lead to the theorem.
\end{proof}

We remark that Theorem \ref{main1}  can be also established 
for finite generalized commutative association schemes similar to Theorem 
\ref{main-g}.

We complete the paper with an example.

\begin{example}
Consider the group $(\mathbb Z,+)$, on which  
the group $H:=\mathbb Z_2=\{\pm 1\}$ acts multiplicatively
 as group of automorphisms. Consider the semidirect product 
$G:=\mathbb Z  \rtimes \mathbb Z_2$ with $H:=\mathbb Z_2$ as finite subgroup.
We then consider $X:=G/H=\mathbb Z$ and $D:=G//H= \mathbb N_0$ (with the
canonical identifications) as well as the corresponding commutative
association scheme  $(\mathbb Z,\mathbb N_0, (R_k)_{k\in \mathbb N_0})$  with
the associated double coset hypergroup $( \mathbb N_0,*)$ and the associated
  transition matrices
$$S_0=I_{\mathbb Z}, \quad S_k(x,y)=\frac{1}{2}\delta_{k,|x-y|} 
\quad(k\in \mathbb N, \> x,y\in \mathbb Z)$$
with the Kronecker-$\delta$.

Now fix some parameter $r>0$ and put $p:=e^r/(e^r+e^{-r})\in ]0,1[$. Define
    the deformed stochastic matrices
$$\tilde S_0=I_{\mathbb Z}, \quad \tilde S_k(x,y)=\frac{1}{p^k+(1-p)^k}
\Bigl(p^k \delta_{k,y-x} +(1-p)^k \delta_{k,x-y}\Bigr)$$
for $k\in \mathbb N_0, \> x,y\in \mathbb Z$. It can be easily checked that for
$k\in\mathbb N$,
\begin{align}
\tilde S_k\tilde S_1 &= \frac{p^{k+1}+(1-p)^{k+1}}{p^k+(1-p)^k}\tilde S_{k+1}
+\Bigl(1-\frac{p^{k+1}+(1-p)^{k+1}}{p^k+(1-p)^k}\Bigr)\tilde S_{k-1}\notag\\
&=\frac{\ch((k+1)r)}{2\ch(kr)\ch(r)}\tilde S_{k+1}+
\frac{\ch((k-1)r)}{2\ch(kr)\ch(r)}\tilde S_{k-1}.
\end{align}
Induction yields that for $k,l\in\mathbb N_0$,
\begin{equation}\label{falt-ch}
\tilde S_k\tilde S_l=\frac{\ch((k+l)r)}{2\ch(kr)\ch(lr)}\tilde S_{k+l}+
\frac{\ch((k-l)r)}{2\ch(kr)\ch(lr)}\tilde S_{|k-l|}.
\end{equation}
We thus obtain the axioms (1)-(4) of \ref{def-gen-discrete-asso}. 
Moreover, with the measure
$\pi(\{x\}):= \Bigl(\frac{p}{1-p}\Bigr)^x = e^{2rx}$ ($x\in\mathbb Z$) and the
identity as involution on $\mathbb N_0$ we also obtain axiom
 \ref{def-gen-discrete-asso}(5). We conclude from (\ref{falt-ch}) that
the associated hypergroup $(\mathbb N_0,\tilde*)$ is the so called discrete
cosh-hypergroup; see \cite{Z} and  3.4.7 and 3.5.72 of \cite{BH}. The
characters of  $(\mathbb N_0,\tilde*)$ are given by
$$\alpha_{\lambda}(n):=\frac{\cos(\lambda n)}{\ch(rn)} \quad(n\in \mathbb
N_0,\>\>\lambda\in [0,\pi]\cup i\cdot[0,r]\cup \{\pi+iz: \> z\in [0,r]\})$$
where in this parameterization, $\alpha_{\lambda}$ is in the support $S$ of the 
Plancherel measure precisely for $\lambda\in [0,\pi]$. Using
$$\frac{\cos(\lambda n)}{\ch(rn)}=
\frac{1}{2}\int_{-\infty}^\infty \frac{\cos(tr n)}{\ch((t+\lambda/r)\pi/2)}\>
  dt
\quad \text{for}\quad \lambda\in\mathbb C, \> |\Im \lambda|<r$$
(see (1) in \cite{Z} and references there) and degenerated formulas
for $\lambda=ir, \pi+ir$, we see readily that each character
of   $(\mathbb N_0,\tilde*)$ has a positive integral representation
w.r.t.~characters of  $(\mathbb N_0,*)$. As for the hypergroup
 $(\mathbb N_0,*)$, the support of the Plancherel measure is equal to 
$(\mathbb N_0,*)^\wedge$, we conclude that the generalized association scheme 
 $(\mathbb Z,\mathbb N_0, (R_i)_{k\in\mathbb N_0 }, (\tilde S_k)_{k\in \mathbb N_0 })$
has the positive connection property.
We thus may apply Theorem \ref{main-g}. The associated dual convolution can be
determined explicitely similar to \cite{Z}.

Notice that the hypergroups $(\mathbb N_0,*)$ and   $(\mathbb N_0,\tilde*)$
 are related by a deformation of the convolution via some positive character as described in \cite{V1}.
\end{example}

\bibliographystyle{amsplain}

\end{document}